\pgfplotsset{compat=1.16}
\setlist[enumerate]{leftmargin=.5in}
\setlist[itemize]{leftmargin=.5in}
\def\addressncsu{Department of Mathematics, North Carolina State University,
  Raleigh, NC, USA}
\def\sandia{Sandia National Laboratories}
\author{
Madhusudan Madhavan\thanks{\addressncsu}, \and 
Alen Alexanderian\thanks{\addressncsu}, \and 
Arvind K. Saibaba\thanks{\addressncsu}, \and 
Bart van Bloemen Waanders\thanks{\sandia}, \and
Rebekah D. White\thanks{\sandia}
}
\title{A control-oriented approach to optimal sensor placement\thanks{\vspace{-\baselineskip}\funding{Madhavan was supported by the National Science
Foundation through the award DMS-1745654. Saibaba was supported, in part, by the
Department of Energy through the award DE-SC0023188.
The work of Alexanderian and van Bloemen Waanders was supported in part by 
the U.S. Department of
Energy, Office of Science, Office of Advanced Scientific Computing Research
Field Work Proposal Number 23-02526.
Madhavan and Saibaba were also supported, in part, by the National Science Foundation through the award DMS-2411198.
}}}
\begin{document}
\maketitle

\begin{abstract}
We propose a control-oriented optimal experimental design (cOED) approach for
linear PDE-constrained Bayesian inverse problems. 
In particular, we consider optimal control 
problems with uncertain parameters that need to be 
estimated by solving an inverse problem, which in turn requires measurement data.
We consider the case where 
data is collected at a set of sensors.  
While classical Bayesian
OED techniques provide experimental designs (sensor placements) that minimize the 
posterior uncertainty in the inversion parameter, these designs are not 
tailored to the demands of the optimal control problem. 
In the present control-oriented setting, we prioritize the designs that minimize the
uncertainty in the state variable being controlled or the control objective. 
We propose a mathematical  
framework for uncertainty quantification and cOED for
parameterized PDE-constrained optimal control problems with linear dependence to
the control variable and the inversion parameter. We also 
present scalable computational methods for computing control-oriented sensor 
placements and for quantifying the uncertainty in the control objective.
Additionally, we present illustrative numerical results in the context of a 
model problem motivated by heat transfer applications.

\end{abstract}

\begin{keywords}
optimal control, inverse problems, Bayesian inference, optimal experimental design, sensor placement.
\end{keywords}

\begin{AMS}
65M32,  %
49N45,  %
62K05,  %
62F15,  %
35R30,  %
65C60.  %
\end{AMS}

\section{Introduction}
\label{sec:intro}

Optimal control of systems governed by partial differential equations (PDEs) 
is a commonly occurring problem in the sciences and engineering. Examples of 
control problems can be found in wildfires, sub-surface petroleum reservoirs, 
HVAC systems, experimental fusion energy, and additive manufacturing 
\cite{GunzburgerFlow2002,Troltzsch2010OCPDE, Kessel1990LinearOC, HibaAdditiveOCP2024}. A key challenge in such optimal control 
problems is the presence of uncertain parameters in the governing PDEs. 
Examples of such parameters include coefficient functions, 
volume of boundary source terms, or initial conditions.

Consider, for example, a control problem of the form
\begin{equation}
    \begin{aligned}
       &\min_{z} \ \ctrlObj(u(z), z; m) \\
       \text{where}
       \\
       &\quad \begin{cases}
          \displaystyle \frac{\partial u}{\partial t} = \mc{G}(u, z, m),\\ u(\cdot, 0) = u_0.
       \end{cases}
    \end{aligned}
    \label{eq:parameterized_governing_PDE}
 \end{equation}
Here, $u$ is a state variable, $z$ is a control, $m$ is an uncertain parameter,
and $\ctrlObj$ is a suitably defined reduced cost functional.  In the present
work, we consider the case where the goal of the control problem is to steer the
state variable toward a desired target terminal state.  Note that the resulting
optimal control problem is parameterized by the uncertain parameter $m$. 

A key challenge in such parameterized optimal control problems is that the computed 
optimal control might depend strongly on the value of the uncertain parameter
$m$. Therefore, having access to a reliable estimate of $m$ is important. 
Often, we can estimate $m$ by solving an inverse problem. 
In the present
work, we consider a Bayesian paradigm~\cite{Tarantola05,Stuart10}, 
in which we use a model and data to find
a posterior distribution law for $m$. Not only does this provide a useful point
estimate for $m$, given by the maximum a posteriori probability (MAP) point, it
also enables quantifying the uncertainty in the optimal control or the control
objectives. In a nutshell, to enable solving the optimal control problem 
with quantified uncertainties, we need to perform the following steps:
\begin{enumerate}[label=(\roman*)]
\item acquire measurement data;
\item solve an inverse problem to estimate $m$; and
\item solve the optimal control problem.
\end{enumerate}
Note, however, that step (i) often involves a costly process.
Namely, acquiring experimental data can be
time-consuming and is subject to physical and/or budgetary constraints. Thus, it is
crucial to collect data optimally. This requires solving an optimal
experimental design (OED) problem~\cite{AtkinsonDonev92,Pukelsheim06,Ucinski05}.
In the present work, we consider a Bayesian approach to OED~\cite{Chaloner1995BayesianED}.

Traditional approaches to Bayesian OED provide techniques for minimizing
uncertainty in the inversion parameter $m$.  However, these approaches are not
well-suited for problems in which the estimation of the inversion parameter is
merely an intermediate step.  Specifically, finding a data acquisition strategy
that is tailored to parameterized optimal control problems requires a
\emph{control-oriented OED (cOED)} framework. This is the primary focus of this
article.  
In particular, cOED involves coupling three different problems: 
(i) an optimal control problem; (ii) an inverse problem; and (iii) an optimal 
sensor placement problem. 
A schematic indicating the different components of such a framework 
is depicted in~\Cref{fig:diag_simple}. 

\begin{figure}
   \centering
   \includegraphics[width=1\textwidth]{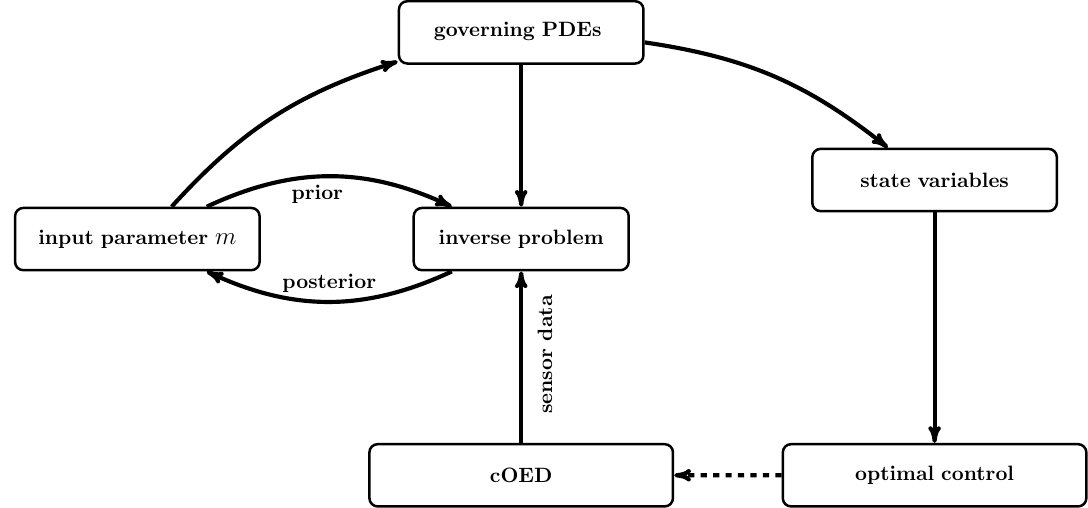}
   \caption{The various aspects of optimal control, inversion, and design of experiments.
   The optimal control problem informs the definition of a cOED criterion. Subsequently, 
   we solve 
   the cOED problem (to obtain sensor data), the inverse problem
   (to estimate $m$), and the optimal control problem. In the diagram, the governing PDEs 
   include the PDEs governing the inverse problem and those governing the 
   control problem.}
   \label{fig:diag_simple}
\end{figure}

In the present work, we take a foundational step in developing a mathematical
and computational framework for inversion, design of experiments, and control
for systems governed by PDEs with infinite-dimensional uncertain parameters.  We
consider the case where $\mc{G}$ in~\cref{eq:parameterized_governing_PDE} is
linear in $u$, $z$, and $m$.  This setup covers problems involving the
control of systems governed by linear PDEs where (i) we seek controls in
boundary or volume source terms; and (ii) have uncertainties in the remaining
source terms in the governing equations.  We focus on problems in which the
inversion parameter $m$ is estimated using measurement data collected at a
set of sensors. 
Moreover, 
we assume a linear (or linearized) model governing the inverse problem.
To elucidate the components of the coupled
optimal control, inversion, and cOED formulations, we present a motivating model
problem formulation, involving a heat transfer application, in~\Cref{sec:motivation}.
The requisite background regarding inversion, design of experiments, and 
optimal control in the class of problems under study is outlined in 
see~\Cref{sec:background_material}.

\boldheading{Related work}
Design of experiments for large-scale Bayesian inverse problems has been subject
to intense research activity in recent years; see the review
articles~\cite{Alexanderian21,Huan2024OptimalED} for a survey of the literature.
Optimal control of systems governed by PDEs under uncertainty 
has also been investigated in various studies; see, e.g., 
\cite{BorziSchulzSchillingsEtAl10,GunzburgerMing11,TieslerKirbyXiuEtAl12,KouriHeinkenschlossRidzalEtAl13,
AlexanderianPetraStadlerEtAl17,ChenGhattas21}. The present work takes a foundational step 
in bridging the gap between 
optimal design of experiments for 
inverse problems governed by PDEs and optimal control under uncertainty.
The proposed approach is related to recent efforts on
\emph{goal-oriented} 
OED~\cite{HerzogRiedelUcinski18,Li19,ButlerJakemanWildey20,ZhongShenCatanachEtAl24}.
Namely, cOED can be considered as a type of goal-oriented OED, with a specific
goal that is informed by an optimal control problem.  The developments in the
present work are related to the
works~\cite{AttiaAlexanderianSaibaba18,WuChenGhattas23}, which consider
goal-oriented optimal design of infinite-dimensional linear Bayesian inverse
problems.  While there has been some efforts on relating optimal control to the
design of experiments \cite{PronzatoOptimal2008, HjalmarssonExperiment2005,
GeversIdentification2005}, to our knowledge, there have been no efforts on a
control-oriented approach to the optimal design of experiments for Bayesian
inverse problems.  

\boldheading{Our approach and contributions}
We consider the average variance in the terminal state, $u(\cdot,
T_\text{final})$, as our choice of design criterion.  We call this the
\emph{cOED criterion}.  
This criterion takes the form of a weighted A-optimality criterion that combines
constructs in the inverse and optimal control problems.
Upon finding an optimal sensor placement, we can solve the
inverse problem to estimate $m$. Subsequently, the optimal control problem is
solved with $m$ taken as the MAP point.  However, the terminal state and 
the optimal control objective are still uncertain, due to the
uncertainty in $m$.  To address this, we also develop a mathematical framework for
quantifying the uncertainty in the optimal control objective.  This
includes expressions for mean and variance of the control objective as
well as a concentration bound. The latter
provides important qualitative insight regarding the importance of optimizing
our cOED criterion.  Namely, we show that the cOED criterion can
be used to bound the probability of large deviations in the control objective.
The proposed framework for quantifying the uncertainty in the terminal state and
the optimal control objective is detailed in~\Cref{sec:dataDrivenOC}.

The expressions for the cOED criterion and measures of uncertainty in the
optimal control objective involve traces of high-dimensional operators that are
defined only implicitly.  To enable implementations in large-scale applications,
we provide fast and accurate computational methods for the various steps of the
proposed control-oriented optimal experimental design and uncertainty
quantification framework. These include fast methods for computing the cOED
objective and the mean and variance of the control objective. The proposed
approach builds on randomized trace estimators and structure-exploiting
approaches for Bayesian inversion in a function space. The proposed
computational framework for cOED and uncertainty quantification for parameterized
optimal control problems is presented in~\Cref{sec:computational_methods}. 

The key contributions of this article are
\begin{itemize}
\item An end-to-end mathematical framework for uncertainty quantification and cOED  for
parameterized PDE-constrained optimal control problems with linear dependence to
the control variable and the inversion parameter;

\item computational methods for fast evaluation of the cOED objective
as well as measures of uncertainty in optimal state trajectory and 
optimal control objective; and 

\item concentration bounds for the control objective, relating the 
proposed cOED objective to the deviations in the control objective.

\end{itemize}
Additionally, we elaborate the proposed framework for cOED
in~\Cref{sec:num_res}, in the context of the heat transfer problem discussed 
in~\Cref{sec:motivation}. Our
extensive numerical results demonstrate the effectiveness of the proposed
strategy.  In~\Cref{sec:conclusion}, we present concluding remarks and discuss
the limitations of the present study and potential directions for 
future work.  

\section{Motivating application and model problem}
\label{sec:motivation}

In this section, we consider a 
model problem to 
illustrate the types of interconnections between optimal control, parameter inversion,
and optimal experimental design (OED) that might arise in applications. The following example builds on well-known example found in inverse problem literature \cite{akcelik2005dynamic,petra2011model,flath2011fast,kim2023hippylib}, as well as OED literature \cite{alexanderian2014a,alexanderian2018efficient,WuChenGhattas23}.
Consider heat transfer in a bounded two-dimensional domain $\Omega$ 
depicted in \Cref{fig:dom}. 
Suppose we have control over a source term $z$ that acts within a particular
subdomain $\Omega_c \subset \Omega$, which we refer to as the control region. Our goal is 
to steer the temperature distribution within $\Omega$
by controlling the temperature within $\Omega_c$. 
\begin{figure}[ht]
   \centering
   \includegraphics[width=.3\textwidth]{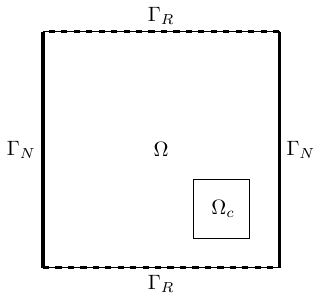}
   \setlength{\belowcaptionskip}{-8pt}  
   \caption{The domain of the heat transfer problem \cref{eq:noControlHeat}.}
   \label{fig:dom}
\end{figure}
To make matters concrete, we assume the system is governed by the following
advection diffusion equation.
\begin{equation}\label{eq:controlProblem}
\begin{alignedat}{2}
    \state_t -\diffParam \Delta \state + \bs{v} \cdot \nabla u &= \invParamCtn + z \, \mathds{1}_{\ctrlDom} 
    &&\quad\text{in } \invDom \times (0, T),\\
    \state(0, \bs{x}) &= \invState(\bs{x}; m)
    &&\quad\text{in } \invDom, \\
    \nabla \state \cdot \vec{n} &= 0 
    &&\quad\text{on } \GN \times (0, T), \\
    \nabla \state \cdot \vec{n} &= \gamma_h(\state - \gamma_a)
    &&\quad\text{on } \GR \times (0, T).
  \end{alignedat}
\end{equation}
Here, $\kappa$ is a diffusion constant, $\bs{v}$ is a (steady) velocity field,
and $\gamma_h, \gamma_a$ are boundary coefficients.  The system is fully
insulated at $\BdrySides$, and $\BdryTops$ experiences heat exchange from
the ambient temperature $\gamma_a$ with heat transfer rate $\gamma_h$. 

Let us consider the volume source terms in~\cref{eq:controlProblem}.
The first term, $m$, is an uncertain background source term, and the second one 
incorporates a control function $z:[0, T] \to \R$, where $T$ is a final time. 
Here, $\mathds{1}_{\ctrlDom}$ is the indicator function of the control region
$\Omega_c$.  We assume that we can control the temperature
uniformly within $\Omega_c$.  Furthermore, 
$m$ is assumed to be a function $m:\Omega\to\R$ that does not depend on time. Therefore, the combined source term 
in~\cref{eq:controlProblem} is the function $(\vec{x}, t)\mapsto m(\vec x) + \mathds{1}_{\ctrlDom}(\vec{x}) z(t)$.
Note that the initial state $\invState$ is also
parameterized by $m$. This is made precise shortly. 

The optimal control problem seeks to 
steer the state $u$ towards a target state
$\bar{u}$ by time $T$.  This is done by finding $u$ 
that solves
\begin{equation}\label{eq:OC_ctn}
  \min_{z \in L^2[0, T]} \Phi(z; m) \eqdef \frac12 \int_\Omega (u(\cdot, T) - \bar{u})^2 d\bs{x} + \frac{\beta}{2}\int_0^T z(t)^2 dt \\,
\end{equation}
where $\state$ is the solution of the problem~\cref{eq:controlProblem} 
and $\beta > 0$ is a regularization parameter.

We next discuss the uncertain source term $m$ in the above optimal control
problem.  
We assume that the temperature distribution within the room is in
steady state before the initial time, $t=0$. Specifically, we assume 
that the initial temperature distribution $\hat{u}$ satisfies   
\begin{equation}\label{eq:noControlHeat}
   \begin{alignedat}{2}
     -\diffParam \Delta \invState &= \invParamCtn 
      &&\quad\text{in } \invDom, \\
     \nabla \invState \cdot \vec{n} &= 0
       &&\quad\text{on } \GN, \\
       \nabla \invState \cdot \vec{n} &= \gamma_h(\invState - \gamma_a)
       &&\quad\text{on } \GR.
     \end{alignedat}
 \end{equation}
This equation describes the (steady) temperature distribution, before 
applying temperature control in $\Omega_c$. Notably, the governing physics in \cref{eq:noControlHeat} is different from that of \cref{eq:controlProblem}.

As noted previously, the source term $m$ is subject to uncertainty. We assume
that we can reduce this uncertainty by solving an inverse problem.  Suppose one
can collect sensor measurements of temperature at a few sensors.  We let
$\bs{y}$ denote the vector of measurements of $\hat{u}$.  In what follows, we
assume the following observation model,
\begin{equation}\label{eq:observation_model_V1}
   \obs = \mc{\hat{B}}\hat{u} + \vec{\eta}.
\end{equation}
Here, $\vec{\eta} \sim \GM{\vec{0}}{\noisecov}$ models measurement noise and
$\mc{\hat{B}}$ is an observation operator that maps $\hat{u}$ to the measurement space.
Note that the mapping $m\mapsto\hat{u}$ is an affine 
mapping of the form $\hat{u} = \mc{S}m + s$,
where $\mc{S}$ is a continuous linear transformation.
We thus restate~\cref{eq:observation_model_V1} as 
\begin{equation}
   \obs = \mc{F}\m + \vec{b} + \vec{\eta},
   \label{eq:observation_model}
\end{equation}
where $\mc{F} \eqdef \mc{\hat{B}} \circ \mc{S}$ and $\vec{b} \eqdef \mc{\hat{B}}s$. 
Combining the present observation model with prior knowledge regarding 
$m$, we can formulate a Bayesian inverse problem for estimating $m$; see 
\Cref{sec:ctn_inverse_problem}.

As noted above, we use sensor measurements of $\hat{u}$ to estimate $m$. 
Optimizing the placement of these sensors requires solving an OED problem; 
see~\Cref{sec:oed} for background concepts regarding OED. 
However, since the estimation of $m$ is only an intermediate step 
and the ultimate goal is solving the optimal control problem~\cref{eq:OC_ctn}, 
a cOED approach is needed. In~\Cref{sec:dataDrivenOC}, 
we detail our proposed cOED formulation, for the class of problems under study.

\section{Preliminaries}
\label{sec:background_material}
In this section, we discuss the necessary background for
infinite-dimensional Bayesian inverse problems, optimal control, and design 
of experiments.

\subsection{Notation}
\label{sec:notation}
For a Hilbert space $\fnsp{H}$, we denote the inner product by
$\ipg{\cdot}{\cdot}{\fnsp{H}}$ and the corresponding induced norm by
$\|\cdot\|_\fnsp{H}^2 = \ipg{\cdot}{\cdot}{\fnsp{H}}$. 
All Hilbert spaces considered in the present work are real separable 
Hilbert spaces.
For two Hilbert spaces $\fnsp{H}_1$ and
$\fnsp{H}_2$, we denote the space of bounded linear transformations from $\fnsp{H}_1$
to $\fnsp{H}_2$ by $\sL(\fnsp{H}_1, \fnsp{H}_2)$ and the space of bounded 
linear operators on $\fnsp{H}$ as $\sL(\fnsp{H})$. We call $\mc{A}^* \in
\sL(\fnsp{H}_2, \fnsp{H}_1)$ the adjoint of a linear operator $\mc{A} \in
\sL(\fnsp{H}_1, \fnsp{H}_2)$ if it holds that $\ipg{\mc{A}u}{v}{\fnsp{H}_2} =
\ipg{u}{\mc{A}^*v}{\fnsp{H}_1}$ for all $u \in \fnsp{H}_1$ and $v \in
\fnsp{H}_2$. 
If $\mc{A} \in \sL(\fnsp{H})$ satisfies $\mc{A} =
\mc{A}^*$, we call $\mc{A}$ self-adjoint. We say that an operator $\mcA
\in \sL(\fnsp{H})$ is positive if $\ipg{x}{\mcA x}{\fnsp{H}} \geq 0$
for all $x \in \fnsp{H}$, and $\mc{A}$ is \emph{strictly} positive, 
if $\ipg{x}{\mcA x}{\fnsp{H}} > 0$ 
for all nonzero $x \in \fnsp{H}$. 
Moreover, a
positive self-adjoint $\mcA \in \sL(\fnsp{H})$ is said to be 
of trace-class if 
$\tr(\mcA) \eqdef \sum_{i=1}^\infty \ipg{\mcA e_i}{e_i}{\fnsp{H}} < \infty$,
where
$\{e_i\}_{i=1}^\infty$ is any orthonormal basis of $\fnsp{H}$. 

In what follows, $\GM{a}{\mcC}$ denotes a Gaussian measure
with mean $a$ and a strictly positive trace-class covariance operator 
$\mcC$. Also,
$\fnsp{U}$, $\fnsp{Z}$, and $\fnsp{M}$ denote the Hilbert spaces
corresponding to the state, control, and inversion parameters, respectively. 

\subsection{Optimal control}
\label{sec:ctn_optimal_control}
We consider optimal control
problems of the form 
\begin{subequations}
\label{eq:OC_problem}
\begin{equation}\label{eq:OC_ctn_obj}
  \min_{\z \in \fnsp{Z}} \Phi(\z; \m) = \frac12 \| \uu(\cdot, T; \m, \z) - \ubr \|_{\fnsp{U}}^2 + \frac\beta2 \| z \|^2_{\fnsp{Z}},
\end{equation}
where the state variable $\uu \in \fnsp{U}$ satisfies
\begin{equation}\label{eq:ctn_state}
\begin{aligned}
   &\frac{d\uu}{dt} = \mcL\uu + \mcC\z + \mc{D}\m + \cc, \\
   &\uu(0) = \uu_0.
\end{aligned}
\end{equation}
\end{subequations}
Here, $\z \in \fnsp{Z}$ is the control variable, $\uu_0 \in \fnsp{U}$ is the
initial state, $\m \in \fnsp{M}$ is an uncertain model parameter, and $\beta > 0$ is
a regularization parameter. 
In~\cref{eq:ctn_state}, $\mc{L}$, $\mc{C}$, and $\mc{D}$ are linear operators, and $\cc \in
\fnsp{U}$ is an affine term.  
Note that subsequent developments can be extended in a straightforward manner to the cases of more general regularization operators.

Let $\uu_T(\m, \z) = \uu(\cdot, T; \m, \z)$
denote the terminal state of $\uu$. This terminal state can be  
obtained via a mapping of the form 
\begin{equation}\label{eq:ctn_control_model}
   \uu_T(m, z) = \mcA \m + \mcB \z + \q, 
\end{equation}
where $\mcA$ and $\mcB$ 
are linear transformations and $\q \in \fnsp{U}$.
We  
assume $\mcA \in \sL(\fnsp{M}, \fnsp{U})$ and $\mcB \in \sL(\fnsp{Z},
\fnsp{U})$. We can thus restate the parameterized
optimal control problem as 
\begin{equation}\label{eq:ctn_OC}
   \min_{\z \in \fnsp{Z}} \Phi(\z; \m) \eqdef \frac12 \| \uu_T(\m, \z) - \ubr \|_{\fnsp{U}}^2 + \frac\beta2 \| z \|^2_\fnsp{Z}.
\end{equation}
For a given $\m$, 
the optimal control $\z^\star_{\m}$ solving~\cref{eq:ctn_OC} satisfies
\[\mcHctrl \ \z^\star_{\m} = \mcB^* (\ubr - \q - \mcA \m),\quad \text{where} \quad \mcHctrl = \mcB^* \mcB + \beta I.\]
Therefore, in the present setting, 
we can express the optimal control as affine 
transformation of $m$, 
$\z^\star_{\m} = - \mcHctrl^{-1} \ \mcB^* \mcA \m + \mcHctrl^{-1} \ \mcB^* (\ubr - \q)$.

\subsection{Bayesian inverse problem}
\label{sec:ctn_inverse_problem}
In the present work, we consider 
inverse problems governed by observation models of the form
\begin{equation}\label{eq:ctn_inversion_model}
   \obs = \mc{F} \m + \vec{b} + \vec\eta.
\end{equation}
Here, $\obs \in \R^{\ny}$ is measurement data, $\mc{F}: \fnsp{M} \to \R^{\ny}$
is a bounded linear transformation, $\vec{b}$ is a constant vector,
and $\vec\eta$ is a random vector modeling measurement noise.
Note that here we have an affine parameter-to-observable map,
$m \mapsto \mc{F} \m + \vec{b}$.

We assume 
$\vec\eta \sim
\GM{\vec{0}}{\noisecov}$, where $\noisecov \in \R^{\ny \times \ny}$ represents
the (symmetric positive definite) noise covariance matrix. 
Furthermore, we assume $m$ and $\vec\eta$ are independent random variables.
We assume a Gaussian prior, 
$\priorm = \GM{m_\mathup{pr}}{\mcC_\mathup{pr}}$, 
for the inversion parameter. Due to the affine structure 
of the parameter-to-observable map and the Gaussian assumption on 
the prior and noise models,  
the solution of the Bayesian inverse problem 
is the Gaussian posterior measure
$\postm = \GM{m_\textup{MAP}}{\mcC_{\mathup{post}}}$ 
where 
\begin{equation}\label{equ:ctn_posterior}
   m_\textup{MAP} = \mcC_{\mathup{post}} \left( \mc{F}^* \noisecov^{-1}(\obs - \vec{b}) + \mcC_{\mathup{pr}}^{-1} m_\mathup{pr} \right)
   \quad\text{and}\quad 
   \mcC_{\mathup{post}} = \left( \mc{F}^* \noisecov^{-1}\mc{F} + \mcC_{\mathup{pr}}^{-1}\right)^{-1}. 
\end{equation}
\subsection{Discretization of control and inverse problems}
\label{sec:disc}
To facilitate computations, the inverse problem and 
the optimal control problem must be discretized.
In this section, we briefly discuss the discretization of 
these problems.

\boldheading{Discretization of Infinite-Dimensional Parameters}
We begin by considering the discretization of the inversion parameter. 
Herein, we assume the inversion parameter space is $L^2(\Omega)$, 
where $\Omega$ is a bounded spatial domain. 
Let $\widehat{\fnsp{M}}$ be a finite-dimensional subspace of $\fnsp{M}$ spanned by
continuous Lagrange nodal basis functions $\{\phi_j\}_{j=1}^{\nm}$. Then, any
$\m \in \widehat{\fnsp{M}}$ can be represented by the vector
$\bs{m} = [\m_1 \; \cdots\; \m_{\nm}]^\top$ of its expansion coefficients. Moreover, note
that for every $m, \ p \in \widehat{\fnsp{M}}$, 
\[
   \ipg{m}{p}{\fnsp{M}} = \sum_{i,j=1}^\nm m_i \ipg{\phi_i}{\phi_j}{\fnsp{M}} p_j = \vec{m}^\top \M \vec{p} 
   \defeq \ip{\vec{m}}{\vec{p}}_\M,
\]
where $\M$ denotes the finite-element mass matrix.
Hence, we represent a discretized parameter $\dpar$ in 
$\R^{\nm}$ equipped with the weighted inner product
$\ip{\cdot}{\cdot}_\M$. 
We denote the inner product space $(\R^{\nm}, \ip{\cdot}{\cdot}_\M)$ by $\R^{\nm}_\M$.
The discretization of the target state $u_T \in \fnsp{U}$ and 
the control variable $z$ are done similarly. See~\Cref{appdx:disc}, 
for details.
We summarize the notations for the discretized 
inversion parameter, and the state and control variables in~\Cref{tab:spaces}.

\begin{table}[ht]
   \centering
   \def\arraystretch{1.5}
   \caption{Discretization of the key quantities in 
            inverse/optimal control problems. Here, 
            $\Rm$, $\Rz$, and $\Ru$ are, respectively, 
            $\R^{\nm}$, $\R^{\nz}$, and $\R^{\nuu}$ equipped with the respective weighted
            inner products and induced norms $\| \cdot \|_\M$, $\| \cdot \|_\Mt$, and $\| \cdot \|_\Mu$.}
   \begin{tabular}{lll}
      \toprule
      \textbf{Quantity} & \textbf{Functional notation} & \textbf{Discretization} \\ 
      \midrule
      inversion parameter & $m \in \fnsp{M}$ & $\bs{m}   \in \Rm$ \\
      control variable & $z \in \fnsp{Z}$    & $\bs{z}   \in \Rz$ \\
      target state & $u_T \in \fnsp{U}$      & $\UT      \in \Ru$ \\ 
      \bottomrule
   \end{tabular}
   \label{tab:spaces}
\end{table}

\boldheading{Discretized Inverse Problem}
The discretized inverse problem seeks to estimate $\vec m$ from the model,
\begin{equation}\label{eq:inversion_model}
   \obs = \FF \dpar + \vec{b} + \vec\eta,
\end{equation}
given a data vector $\obs \in \R^{\ny}$. 
Here, $\FF: \Rm \to \R^{\ny}$ is the discretization of $\mc{F}$
in~\cref{eq:ctn_inversion_model}.
The corresponding posterior distribution for $\vec{m}$ is given by 
$\postm = \GM{\dparmap}{\postcov}$, where 
\begin{equation}
   \label{eq:posterior_exp}
      \dparmap = \postcov \left( \FF^* \noisecov^{-1}(\obs - \vec{b}) + \prcov^{-1}\dparpr \right)
      \quad\text{and}\quad
      \postcov = \left( \FF^* \noisecov^{-1}\FF + \prcov^{-1}\right)^{-1}. 
\end{equation}

\noindent%
Note that here $\prcov$ is the discretized prior covariance operator and $\FF^*$
is the adjoint of $\FF$.  It is important to note that $\FF$ is a linear
transformation from $\R^{\nm}_\M$ to the measurement space, which is equipped
with the Euclidean inner product.  This implies $\FF^* = \M^{-1}\FF^\top$. 
See~\cite{Bui-Thanh2013}
for
further details regarding discretization of infinite-dimensional Bayesian
inverse problems.

\boldheading{Discretized Optimal Control Problem}
We consider
\begin{subequations}
\label{eq:disc_OC_problem}
\begin{equation}
   \min_{\ctrl \in \Rz} \ctrlObjFunc(\ctrl; \dpar) \eqdef \frac12 \| \vec{u}(T) - \ubar \|^2_\Mu 
   + \frac\beta2 \| \vec{z}\|_\Mt^2,
\end{equation}
where the state variable $\vec{u}$ satisfies
\begin{equation}\label{eq:state}
\begin{aligned}
   &\frac{d\vec{u}}{dt} = \mat{L}\vec{u} + \mat{C}\vec{z} + \mat{D}\vec{m} + \vec{c}, \\
   &\vec{u}(0) = \vec{u}_0.
\end{aligned}
\end{equation}
\end{subequations}
Here, $\ctrl$ is the discretized control variable, $\vec{u}_0$ is the discretized
initial condition, and $\dpar$ is the discretized inversion parameter.  Here,
$\mat{L}$, $\mat{C}$ and $\mat{D}$ are (finite-dimensional) linear operators, 
and $\vec{c} \in \Ru$ is an affine term pertaining to the prediction model. 
Note that the terminal state, $\UT \eqdef
\bs{u}(T)$, is obtained via a mapping of the form 
\begin{equation}\label{eq:control_model}
   \UT(\dpar, \ctrl) = \AA \dpar + \BB\ctrl + \vec{q}, 
\end{equation}
where $\AA \in \sL(\Rm, \Ru)$, $\BB \in \sL(\Rz, \Ru)$,
and $\vec{q} \in \Ru$. We also need the 
adjoints of $\AA$ and $\BB$ in what follows. It is simple to 
note that 
$\AA^* = \M^{-1}\AA^{\top}\Mu$ and $\BB^* = \Mz^{-1} \BB^{\top} \Mu$.

Restating 
the parameterized optimal control problem as 
\begin{equation}\label{eq:OC}
  \min_{\vec{z} \in \Rz} \ctrlObjFunc(\ctrl; \dpar) = \frac12 \| \UT(\dpar, \ctrl) - \ubar \|^2_\Mu + \frac\beta2 \| \vec{z}\|_\Mt^2,
\end{equation}
we note that 
the optimal control $\vec{\z}^\star_\dpar$, corresponding to a fixed $\vec m$, 
satisfies
\[
\hessctrl \vec{\z}^\star_\dpar = \BB^* (\ubar - \vec{q} - \AA \dpar),\quad \text{where} \quad \hessctrl = \BB^* \BB + \beta \Mt .
\]
We can therefore express the optimal control $\vec{\z}^\star_\dpar$ as 
\begin{equation}\label{eq:nominal_control}
   \vec{\z}^\star_\dpar \eqdef - \hessctrl^{-1} \ \BB^* \AA \dpar + \hessctrl^{-1} \ \BB^* (\ubar - \vec{q}).
\end{equation}
Recall that the inversion parameter $\dpar$ is unknown and is modeled as a 
random variable. 
A useful estimate of this parameter is given by 
the MAP point $\dparmap$; see \cref{eq:posterior_exp}.
In particular, 
this yields a nominal optimal control  $\ctrln \eqdef \ctrln_{\dparmap}$.
This provides an estimate for the optimal control that
reflects the \textit{best-guess} inversion parameter and is the control that we
would recommend to a practitioner to implement. 

\subsection{Optimal experimental design (OED)}
\label{sec:oed}
We formulate the OED problem  
as that of selecting an optimal subset from a set
$\{\vec{x}_i\}_{i=1}^{\ns}$ of candidate locations for placing sensors.
To facilitate this, we assign a binary weight $w_i \in \{0, 1\}$ to each
candidate location.  Sensors are placed at locations whose corresponding weight
is one.  In this setting, an experimental design is fully specified by a weight
vector $\vec{w} = [w_1 \; \cdots \; w_{\ns}]^\top \in \{0, 1\}^\ns$. 
In what follows, we assume the sensor measurements are uncorrelated and let $\noisecov = \sigma^2 \mat{I}$ for convenience. Note that it is possible to extend the subsequent theory and derivations in a straightforward manner for the case of correlated noise.

Before formulating an OED problem, we need to specify how an experimental 
design is incorporated in the Bayesian inverse problem. 
Following the formulations in~\cite{Alexanderian2013AOptimalDO}, 
we consider the design-dependent posterior distribution 
$\GM{\dparmap(\vec{w}, \obs)}{\postcov(\vec{w})}$, where
\begin{equation}\label{eq:posterior_exp_disc_w}
      \dparmap(\vec{w}, \obs) = \postcov \left( \FF^* \mat{W}_\sigma(\obs - \vec{b}) + \prcov^{-1}\dparpr \right), 
      \quad
      \postcov(\vec{w}) = \left(\FF^* \mat{W}_\sigma\FF + \prcov^{-1}\right)^{-1}, 
\end{equation}
where $\mat{W}_\sigma \eqdef \sigma^{-2}\diag(\vec{w})$. 
Note that whereas $\dparmap$ depends on measurement data, 
the posterior covariance
$\postcov$ depends only on the design $\bs{w}$.
Note also that 
\begin{equation}
   \postcov(\vec w) = \prcov^{1/2} \ (\mat{\tilde H}_{\text{misfit}}(\vec w) + \mat{I})^{-1}\ \prcov^{1/2},
   \label{eq:postcov_hess}
\end{equation}
where $\mat{\tilde H}_{\text{misfit}}(\vec w)$ is the \textit{prior-preconditioned} data
misfit Hessian~\cite{Bui-Thanh2013} defined as, 
\begin{equation}\label{eq:prior_precond_Hm}
   \mat{\tilde H}_{\text{misfit}}(\vec w) = \prcov^{1/2}(\FF^*\mat{W}_\sigma \FF)\prcov^{1/2}.
\end{equation} 
In ill-posed inverse problems with sparse 
data $\mat{\tilde H}_{\text{misfit}}$ can often 
be represented accurately with a low-rank approximation. This 
plays a key role in the computational methods in~\Cref{sec:computational_methods}.

The OED problem seeks to minimize a design criterion 
$\Psi(\bs{w})$ over the set 
$\{ \vec{w} \in \{0, 1\}^{n_s} : \sum_{i=1}^{n_s} w_i = k\}$, where $k$ is a sensor budget.
Note that the design criterion cannot be dependent 
on a specific realization of data, as experimental data is not available \textit{a priori}
at the stage of solving an OED problem. In traditional Bayesian
OED, one typically considers design criteria that quantify 
measures of posterior uncertainty in the inversion parameter 
or information gain. 
An example is the A-optimality criterion 
\begin{equation}\label{eq:aoptimal_criterion}
\Psi^A(\bs{w}) \eqdef \tr\left[ \postcov(\bs{w}) \right],
\end{equation} 
which quantifies the 
average posterior variance
of the inversion parameter.

\section{Control-oriented design criterion and uncertainty quantification}
\label{sec:dataDrivenOC}
In this section, we present our proposed mathematical framework for 
control-oriented OED (cOED) and uncertainty quantification in parameterized
optimal control problems.  We begin by deriving our proposed cOED criterion
in~\Cref{sec:OptimalityCriteria}.
Subsequently, in \Cref{sec:uq}, we characterize the first and second moments of
the optimal control objective and derive a concentration bound for this
quantity.  The latter analysis connects the proposed cOED criterion to the
tail probabilities of the optimal control objective. 

\subsection{A control-oriented design criterion}
\label{sec:OptimalityCriteria}

Let us consider how the posterior uncertainty in the inversion parameter
propagates to the optimal terminal state, i.e., 
the terminal state~\cref{eq:control_model} evaluated at the optimal control.  
Upon parameter inversion, we obtain a
posterior $\postm = \GM{\dparmap^\obs}{\postcov}$ as
defined in \cref{eq:posterior_exp_disc_w}. The resulting MAP parameter estimate,
$\dparmap^\obs$, is then used to compute a nominal optimal control $\ctrln$
\cref{eq:nominal_control}. We apply this control to the system
\cref{eq:control_model}, which results in the optimal terminal state,
\begin{equation}\label{eq:UTs}
   \UTs(\dpar) \eqdef \AA \dpar + \BB \ctrln + \vec{q}.
\end{equation} 

The goal of optimal control is to guide $\UTs(\dpar)$ towards the
target state $\bs{\bar{u}}$. However, the terminal state of the system,
$\UTs(\dpar)$, itself is uncertain due to uncertainty in $\dpar$. 
We can use the posterior distribution $\postm$ of $\dpar$ to obtain a
probabilistic depiction of the terminal state. Specifically, we have
\begin{equation}\label{eq:UTs_law}
   \UTs \sim \GM{\UTs(\dparmap^\obs)}{\mat{A}\postcov\mat{A}^*}.
\end{equation}

In a control-oriented application, it is important to reduce uncertainty in
$\UTs$. 
Following a weighted A-optimal design approach, we define
the cOED criterion as 
\begin{equation}\label{eq:cOED}
    \Psi^{cA} \eqdef \tr(\mat{A} \postcov \mat{A}^*).
\end{equation}
This criterion quantifies the \textit{average} variance in the terminal state 
$\UTs(\dpar)$. Note that this coincides with the average posterior variance of 
$\UTs - \bar{\bs{u}}$. Therefore, the cOED criterion is also representative of
uncertainty in the deviation of the terminal state from the target state. 
It is worth noting that there exist different ways of equivalently expressing
the posterior covariance of $\UTs$ \cite{SpantiniGoalOriented17,WuChenGhattas23}.
Moreover,
one can also view the cOED criterion from a decision-theoretic perspective as
the expected Bayes Risk of $\UTs$:
\begin{equation}
   \Psi^{cA} = \tr(\mat{A} \postcov \mat{A}^*) = \Exp_{\priorm} \left\{ \Exp_{\bs{y} | \bs{m}}\left[ \|\UTs(\dparmap^\obs) - \UTs(\bs{m})\|_{\Mu}^2 \right] \right\},
   \label{eq:cOED_bayes_risk}
\end{equation}
The proof of this relation is given in \cite[Theorem
3.1]{AttiaAlexanderianSaibaba18}. Thus, the cOED criterion also quantifies the
expectation, over the set of all likely data, of the mean-squared deviation of
the optimal terminal state. For further details on  
the concept of Bayes risk, see, e.g.,~\cite[Page 87]{Tenorio17}.

\subsection{Measures of uncertainty in the control objective}
\label{sec:uq}

Upon solving the cOED problem, we obtain a sensor placement. 
Then, we can solve the inverse problem to estimate 
the inversion parameter $\dpar$. Subsequently, the MAP estimate of 
$\dpar$ can be used to solve 
the optimal control problem.  A key downstream consideration is 
characterizing the uncertainty in the optimal control objective.
In particular, we consider the uncertainty in the (non-regularized) control
objective, 
\begin{equation}
   \ctrlObjs(\dpar) \eqdef \frac12 \| \UTs(\dpar) - \ubar\|^2_\Mu = \frac12 \| \AA \dpar + \BB \ctrln +\vec{q} - \ubar\|^2_\Mu.
   \label{eq:ctrlObj}
\end{equation}
Note that $\ctrlObjs$ is a quadratic functional of a Gaussian random variable.
This enables efficient sampling and theoretical analysis of this quantity. In
\Cref{sec:expvar_ctrl_obj}, we  derive the expressions for the expectation and
variance of the control objective and relate these expressions to the cOED
criterion.  Also, in \Cref{sec:conc_bds_ctrlObj}, we derive a
concentration bound that characterizes uncertainty in the control objective in
terms of $\Psi^{cA}$. These results provide a framework 
for quantitatively and qualitatively assessing the uncertainty in the 
control objective.

\subsubsection{Expectation and Variance of $\ctrlObjs$}
\label{sec:expvar_ctrl_obj}
We begin by presenting the expressions for the mean and variance of the
control objective with respect to the posterior law $\postm =
\GM{\dparmap}{\postcov}$ of $\dpar$, whose mean and covariance are defined
in~\cref{eq:posterior_exp}.
\begin{proposition}
\label[prop]{prop:ctrlObjMeanVar}
Let $\Exp(\cdot \, | \, \obs)$ and $\Var(\cdot \,| \, \obs)$ 
denote expectation and variance with respect to 
$\postm$. We have
\begin{subequations}
   \label{eq:ctrlObjMeanVar}
   \begin{align}
      \Exp(\ctrlObjs | \ \obs) &= \frac{1}{2} \tr\left[\AA\postcov\AA^*\right] + \frac{1}{2} \|\AA\dparmap^\obs + \BB \ctrln + \vec{q} - \ubar\|^2_\Mu \label{eq:ctrlObjMean},\\
      \Var(\ctrlObjs| \ \obs) &= \frac{1}{2} \tr\left[ (\AA\postcov\AA^*)^2 \right] + \|\AA\dparmap^\obs + \BB \ctrln + \vec{q} - \ubar \|^2_\mat{\Mu\AA\postcov\AA^*} \label{eq:ctrlObjVar}.
   \end{align}
\end{subequations}
Note that $\ctrln$ depends on $\dparmap$, whose dependence on $\obs$ is highlighted as $\dparmap^{\obs}$ for clarity.
\end{proposition}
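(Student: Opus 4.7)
The plan is to reduce both formulas to standard Gaussian quadratic-form identities after identifying the conditional law of the terminal-state residual under the posterior. I would first introduce the shifted residual $\vec{r}(\dpar) \eqdef \AA\dpar + \BB\ctrln + \vec{q} - \ubar$, so that $\ctrlObjs(\dpar) = \tfrac{1}{2}\|\vec{r}(\dpar)\|_\M^2$. Since $\vec{r}$ is affine in $\dpar$ and $\dpar$ is Gaussian under $\postm$, the conditional law of $\vec{r}$ is Gaussian with mean $\vec{r}_0 \eqdef \AA\dparmap^\obs + \BB\ctrln + \vec{q} - \ubar$ and covariance operator $\Sigma \eqdef \AA\postcov\AA^*$, exactly as in the derivation of \cref{eq:UTs_law}.

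Writing $\vec{r} = \vec{r}_0 + \delta$ with $\delta \sim \GM{0}{\Sigma}$, I would expand
\[
\|\vec{r}\|_\M^2 \;=\; \|\vec{r}_0\|_\M^2 + 2\ip{\vec{r}_0}{\delta}_\M + \|\delta\|_\M^2 .
\]
Taking the posterior expectation, the linear term vanishes because $\delta$ is centered, and the quadratic term contributes $\tr(\Sigma)$; multiplying by $\tfrac{1}{2}$ yields \cref{eq:ctrlObjMean}. For the variance, the constant $\|\vec{r}_0\|_\M^2$ drops out, and the cross covariance between the linear piece $\ip{\vec{r}_0}{\delta}_\M$ and the quadratic piece $\|\delta\|_\M^2$ vanishes, since third-order centered moments of a Gaussian are zero. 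Hence
\[
\Var(\|\vec{r}\|_\M^2) \;=\; 4\,\Var(\ip{\vec{r}_0}{\delta}_\M) + \Var(\|\delta\|_\M^2) .
\]
The linear-term variance is simply $\ip{\vec{r}_0}{\Sigma\vec{r}_0}_\M$, which gives the $\M\Sigma$-weighted quadratic in \cref{eq:ctrlObjVar}. For the quadratic term, I would invoke a spectral (Karhunen--Lo\`eve) decomposition of the self-adjoint operator $\Sigma$: choosing an orthonormal eigenbasis $\{e_i\}$ with eigenvalues $\{\lambda_i\}$, one has $\delta = \sum_i \sqrt{\lambda_i}\,\xi_i\, e_i$ with $\xi_i$ i.i.d.\ standard normal, so $\|\delta\|_\M^2 = \sum_i \lambda_i \xi_i^2$ has variance $2\sum_i \lambda_i^2 = 2\tr(\Sigma^2)$. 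Combining these contributions and multiplying by $\tfrac{1}{4}$ then recovers \cref{eq:ctrlObjVar}.

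The main obstacle I anticipate is bookkeeping the weighted-inner-product structure consistently: traces, adjoints, and covariance operators must all be interpreted relative to the inner products introduced in \Cref{sec:disc} (so that, e.g., $\AA^* = \M^{-1}\AA^\top\Mu$ and $\Sigma$ is genuinely self-adjoint on the appropriate weighted space). Once that framing is fixed, everything reduces to the standard Gaussian identities $\Exp\|\delta\|^2 = \tr(\Sigma)$ and $\Var\|\delta\|^2 = 2\tr(\Sigma^2)$, together with the vanishing of odd centered Gaussian moments, and the derivation is mechanical.
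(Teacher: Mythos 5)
Your proposal is correct and follows essentially the same route as the paper: both reduce $\ctrlObjs$ to a Gaussian quadratic form in the weighted inner product and apply the standard identities $\Exp\|\delta\|^2 = \tr(\Sigma)$ and $\Var\|\delta\|^2 = 2\tr(\Sigma^2)$ plus the rank-one term from the mean. The only cosmetic difference is that the paper packages the moment computation as a standalone lemma proved by diagonalizing $\mat{\Sigma}^{1/2}\spd\mat{\Sigma}^{1/2}$ into a generalized chi-squared variable, whereas you expand around the mean and treat the linear and quadratic fluctuations separately; the content, including your (correctly flagged) care with the weighted adjoints and covariance $\AA\postcov\AA^*$, is the same.
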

\begin{proof} See \Cref{sec:proof_ctrlObjMeanVar}. \end{proof}

Using \Cref{prop:ctrlObjMeanVar}, one can quantitatively estimate the
expected error in the terminal state, having applied the nominal optimal
control corresponding to the MAP point. Note that \cref{eq:ctrlObjMean} can be
restated in terms of the control-oriented design criterion, $\Psi^{cA}$, as
\begin{equation}\label{eq:expAopt}
   \Exp(\ctrlObjs | \ \obs) = \frac{1}{2} \left[ \Psi^{cA} + \ctrlObjs(\dparmap^\obs) 
   \right].
\end{equation}
This suggests that minimization of the control-oriented criterion can
potentially reduce the expected control objective. However, due to the
dependence of \cref{eq:expAopt} on data, minimization of $\Psi^{cA}$ may not
guarantee minimization of $\Exp(\ctrlObjs | \ \obs)$. Nevertheless, in the case
where the variance of $\Psi^{cA}$ over designs dominates
$\ctrlObjs(\dparmap^\obs)$, the expectation and variance of the control
objective is often minimized with a cOED. 

We also note that the cOED criterion can be bounded by the classical 
A-optimality criterion~\cref{eq:aoptimal_criterion}. Namely, 
it is straightforward to show that 
\[
   \frac{|\Psi^{cA}-\Psi^{A}|}{\Psi^{A}} \leq \| \mat{A}^* \mat{A} - \mat{I}\|,
\]
where the norm in the right-hand side is the operator norm induced by the norm 
$\| \cdot \|_\M$ on $\Rm$; see \cite[Fact~5.12.7]{BernsteinMatrixMath11}. 
As a result, in situations where 
$\mat{A}$ has near-orthogonal columns, minimizing the classical A-optimality 
criterion can be beneficial in terms of reducing 
the cOED criterion. This, however, does not hold in general, where 
$\| \mat{A}^*\mat{A} - \mat{I}\|$ might be large.  
\subsubsection{Concentration Bounds for $\ctrlObjs$}
\label{sec:conc_bds_ctrlObj}
In practical settings, it is often of interest to gauge the probability of a
\textit{worst-case} scenario, i.e., the case where the nominal control yields an
unexpectedly large control objective.  Here, we analyze the probability
of deviations of the control objective from its expectation and
demonstrate the advantages of a control-oriented design in this aspect. 
The following result \Cref{prop:ctrlObjConcentration} and the subsequent 
corollary provide insight into this.

\begin{proposition}
   [Concentration bound for $\ctrlObjs$.]%
   \label[prop]{prop:ctrlObjConcentration}
   For every $\tau \geq 0$, 
   \begin{equation}
      \P\left( \left| \ctrlObjs(\dpar) -  \Exp(\ctrlObjs | \ \obs)\right| \geq \tau\right) \leq 4\exp\left[ - \frac{1}{8}\min\left\{ \frac{\tau}{\Psi^{cA}}, \frac{\tau^2}{(\Psi^{cA})^2}, \frac{\tau^2}{C^2} \right\} \right],
      \label{eq:ctrlObjConcentration}
   \end{equation}
   where
   $C = \|\AA\dparmap + \BB \ctrln + \vec{q} - \ubar\|_\mat{\M_u\AA\postcov\AA^*}$.
\end{proposition}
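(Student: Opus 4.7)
The plan is to condition on $\obs$, decompose $\ctrlObjs(\dpar) - \Exp(\ctrlObjs\mid\obs)$ as a centered linear plus centered quadratic functional of a Gaussian, and combine tail bounds for each piece via a union bound. Writing $\dpar = \dparmap + \vec{\epsilon}$ with $\vec{\epsilon}\sim\GM{\vec{0}}{\postcov}$ under the posterior, and setting $\vec{r} = \AA\dparmap + \BB\ctrln + \vec{q} - \ubar$, expanding the square in \cref{eq:ctrlObj} and subtracting \cref{eq:ctrlObjMean} yields
\begin{equation*}
\ctrlObjs(\dpar) - \Exp(\ctrlObjs\mid\obs) = L + Q,\quad L = \ipg{\vec{r}}{\AA\vec{\epsilon}}{\Mu},\quad Q = \tfrac{1}{2}\bigl(\|\AA\vec{\epsilon}\|^2_{\Mu} - \Psi^{cA}\bigr).
\end{equation*}
The event $\{|L+Q|\geq\tau\}$ is contained in $\{|L|\geq\tau/2\} \cup \{|Q|\geq\tau/2\}$, reducing the task to controlling the tails of $L$ and $Q$ separately.

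The linear term $L$ is a scalar centered Gaussian. Using the adjoint identity $\AA^* = \M^{-1}\AA^\top\Mu$ together with the posterior covariance operator of $\vec{\epsilon}$, a direct computation gives $\Var(L) = \ipg{\vec{r}}{\AA\postcov\AA^*\vec{r}}{\Mu} = C^2$, whence the standard Gaussian tail yields $\P(|L|\geq\tau/2) \leq 2\exp(-\tau^2/(8C^2))$. For the quadratic piece, I would diagonalize the $\M$-self-adjoint positive operator $\postcov^{1/2}\AA^*\AA\postcov^{1/2}$ (which has the same spectrum as $\AA\postcov\AA^*$) to write $Q = \tfrac{1}{2}\sum_i \lambda_i(\xi_i^2 - 1)$, with $\{\xi_i\}$ i.i.d.\ standard Gaussians and eigenvalues $\lambda_i \geq 0$ summing to $\tr(\AA\postcov\AA^*) = \Psi^{cA}$. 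Applying the Laurent--Massart inequality for weighted centered chi-squared sums (equivalently, a Hanson--Wright bound for Gaussians), and using the non-negativity of the $\lambda_i$ to deduce $\|\bs{\lambda}\|_\infty,\, \|\bs{\lambda}\|_2 \leq \|\bs{\lambda}\|_1 = \Psi^{cA}$, one obtains
\begin{equation*}
\P(|Q|\geq\tau/2) \leq 2\exp\!\Bigl(-\tfrac{1}{8}\min\bigl\{\tfrac{\tau^2}{(\Psi^{cA})^2},\, \tfrac{\tau}{\Psi^{cA}}\bigr\}\Bigr).
\end{equation*}

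Summing the two tail bounds and applying the elementary inequality $2a + 2b \leq 4\max\{a,b\}$ to the exponentials produces the claimed bound \cref{eq:ctrlObjConcentration}, with the prefactor $4$ and with the three terms inside the minimum corresponding respectively to the Gaussian tail of $L$ and to the sub-Gaussian and sub-exponential regimes of the chi-squared tail of $Q$. The main obstacle is the bookkeeping across mixed inner products---in particular, checking that the Euclidean expression $\vec{r}^\top\Mu\AA\postcov\M^{-1}\AA^\top\Mu\vec{r}$ for $\Var(L)$ collapses to $C^2$ via the adjoint identity, and that the specific constants in the Laurent--Massart/Hanson--Wright inequality align cleanly with the $\tfrac{1}{8}$ factor in \cref{eq:ctrlObjConcentration} after the $\tau/2$ union-bound split; otherwise the argument is a textbook combination of Gaussian and chi-squared concentration.
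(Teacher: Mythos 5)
Your proposal is correct and follows essentially the same route as the paper: both split the objective into a centered Gaussian quadratic form plus a Gaussian cross term, control the quadratic piece via Hanson--Wright (your $\|\bs{\lambda}\|_2 \leq \|\bs{\lambda}\|_1$ step is exactly the paper's $\tr[\mat{Z}^2] \leq (\tr\mat{Z})^2$ relaxation), control the linear piece by a standard Gaussian tail with variance $C^2$, and combine with a union bound. If anything, your explicit $\tau/2$ split in the union bound is cleaner than the paper's version of the same step.
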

\begin{proof} See \Cref{sec:proof_ctrlObjConcentration}. \end{proof}
\begin{corollary}
   Consider the setup from \Cref{prop:ctrlObjConcentration}. Let $0 < \delta < 1$. Then, with probability of at least $1-\delta$, 
   \begin{equation}
      \left| \ctrlObjs(\dpar) -  \Exp(\ctrlObjs | \ \obs)\right| \leq \sqrt{8\log(4/\delta)}\max\left\{\sqrt{8\log(4/\delta)}\Psi^{cA}, \Psi^{cA}, C \right\},
      \label{eq:ctrlObjConcentrationInterval}
   \end{equation}
   where $C$ is as defined in \Cref{prop:ctrlObjConcentration}.
   \label[cor]{cor:controlObjInterval}
\end{corollary}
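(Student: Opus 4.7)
The plan is to simply invert the tail bound from \Cref{prop:ctrlObjConcentration}. Given $\delta \in (0,1)$, I want to choose $\tau$ so that the right-hand side of \cref{eq:ctrlObjConcentration} is at most $\delta$; then the complementary event gives the claimed high-probability interval. Concretely, I will require
\[
   4\exp\!\left[-\frac{1}{8}\min\!\left\{\frac{\tau}{\Psi^{cA}},\ \frac{\tau^2}{(\Psi^{cA})^2},\ \frac{\tau^2}{C^2}\right\}\right]\ \leq\ \delta,
\]
which is equivalent to
\[
   \min\!\left\{\frac{\tau}{\Psi^{cA}},\ \frac{\tau^2}{(\Psi^{cA})^2},\ \frac{\tau^2}{C^2}\right\}\ \geq\ 8\log(4/\delta).
\]
Since a minimum of finitely many quantities is bounded below by $\alpha$ iff each of them is, this reduces to three simultaneous requirements on $\tau$.

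Next I would solve each of the three inequalities individually. The first gives $\tau \geq 8\log(4/\delta)\,\Psi^{cA}$, the second gives $\tau \geq \sqrt{8\log(4/\delta)}\,\Psi^{cA}$, and the third gives $\tau \geq \sqrt{8\log(4/\delta)}\,C$. Taking $\tau$ to be the maximum of these three lower bounds and factoring the common $\sqrt{8\log(4/\delta)}$ out yields
\[
   \tau\ =\ \sqrt{8\log(4/\delta)}\,\max\!\left\{\sqrt{8\log(4/\delta)}\,\Psi^{cA},\ \Psi^{cA},\ C\right\},
\]
which is precisely the threshold appearing in \cref{eq:ctrlObjConcentrationInterval}. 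By construction, with this choice of $\tau$ the probability of $|\ctrlObjs(\dpar) - \Exp(\ctrlObjs \mid \obs)| \geq \tau$ is at most $\delta$, so the complementary event holds with probability at least $1-\delta$, proving the corollary.

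There is no real obstacle here beyond bookkeeping: the only care needed is recognizing that the piecewise nature of the exponent in \cref{eq:ctrlObjConcentration}, arising from the subexponential/subgaussian trichotomy typical of Hanson--Wright-type bounds, produces three distinct regimes for $\tau$, and that the max-over-three structure in the corollary is exactly the inversion of the min-over-three structure in the tail bound. Factoring $\sqrt{8\log(4/\delta)}$ cleanly across all three terms (noting that $8\log(4/\delta) = \sqrt{8\log(4/\delta)} \cdot \sqrt{8\log(4/\delta)}$) then delivers the stated form verbatim.
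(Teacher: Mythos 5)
Your proposal is correct and follows essentially the same route as the paper's proof: invert the tail bound by forcing the exponent's minimum to be at least $8\log(4/\delta)$, solve the three resulting inequalities for $\tau$, and take the maximum. The paper states this only as ``rearranging and solving for $\tau$,'' so your explicit treatment of the three regimes and the factoring of $\sqrt{8\log(4/\delta)}$ is just a more detailed writeup of the identical argument.
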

\begin{proof} See \Cref{sec:proof_ctrlObjConcentrationCorollary}. \end{proof}
Note that for large deviations $\tau$, the concentration bound
\cref{eq:ctrlObjConcentration} behaves like $\exp(-\tau/\Psi^{cA})$. In other
words, the probability of the control objective being much larger than its
expectation (i.e., probability of a worst-case scenario) is restricted by
$\Psi^{cA}$. Similarly, \Cref{cor:controlObjInterval} shows that the size of
high-probability confidence regions for the control objective is proportional to
$\Psi^{cA}$.  Based on these observations, it is expected that an experimental
design minimizing the control-oriented criterion $\Psi^{cA}$ would aid in
reducing uncertainty in the control objective. Computational results in
\Cref{sec:num_res_oed_uq} provide further intuition for the established
relationship between the expectation and variance of $\ctrlObjs$ and the cOED
criterion.

\section{Computational methods}
\label{sec:computational_methods}
In this section, we discuss the computational methods used for solving the
control-oriented OED (cOED) problem and quantifying the uncertainty in the
control objective.  We rely on a greedy approach for approximately solving the
cOED problem; see \Cref{sec:greedy}. Such optimization routines involve
repeated evaluations of the cOED criterion. Hence, efficient estimation of the
cOED criterion is essential. In \Cref{sec:computeAopt}, we outline fast
computational methods for computing this criterion. Then, in
\Cref{sec:ctrlObjEstimation} we discuss fast computation of measures of
uncertainty in the control objective.  

\subsection{Greedy sensor placement}
\label{sec:greedy}
To find an optimal sensor placement, we need to minimize the cOED 
criterion 
given
a fixed budget $k$ of sensors. Solving this combinatorial optimization problem 
via exhaustive search is
computationally intractable in practice. 
A practical approach for approximately solving this problem is to use a
greedy procedure; see~\Cref{alg:greedy}.  In this approach, sensors
are selected by successively identifying a candidate sensor location in each
stage that locally minimizes the design criterion.  Note that the minimizer in
line 3 of \Cref{alg:greedy} may not be unique, in which case, we
arbitrarily select one of the minimizers.

Despite being suboptimal, the greedy
approach can provide useful designs in practice, and its effectiveness has been analyzed rigorously in the context of experimental design
\cite{Jagalur2021Batch,Bian2017Guarantees}. In place of the greedy
algorithm, one may also consider exchange algorithms \cite{Harman2018ARE},
relaxation approaches \cite{Alexanderian2013AOptimalDO}, stochastic binary
optimization \cite{Attia2021StochasticLA}, or other heuristic methods for
optimization \cite{Huan2024OptimalED}. Note that \Cref{alg:greedy} still
requires many evaluations of the criterion, roughly in the order of $k\ns$.
Hence, efficient computation of the design criterion is critical to the
feasibility of the method, which we address next. 

\begin{algorithm}[ht]
   \caption{Greedy Sensor Placement}
   \begin{algorithmic}[1]
   \Require Target number of sensors $k \leq n_s$ and design criterion $\Psi: \vec{w} \mapsto \Psi(\vec{w}) \in \R^+$ 
   \Ensure Design vector $\vec{w} \in \R^{\ns}$
   \State Set $\vec{w} = \vec{0}$, $\mathcal{U} = \{1,\ldots,\ns\}$, and $\mc{S} = \emptyset$
   \For{$i = 1, 2, \ldots, k$}
       \State $i^* = \argmin_{j \in \mc{U} \setminus \mc{S}} \Psi(\vec{w} + \vec{e}_j)$
       \State $\mc{S} = \mc{S} \cup \{i^*\}$
       \State $\vec{w} = \vec{w} + \vec{e}_{i^*}$
   \EndFor
   \end{algorithmic}
   \label{alg:greedy}
\end{algorithm}

\subsection{Efficient estimation of cOED criterion}
\label{sec:computeAopt}
Consider the control-oriented A-optimal criterion from \cref{eq:cOED}:
\begin{equation}
   \Psi^{cA} = \tr(\AA   \postcov \AA^*) = \sum_{i=1}^{\nuu} \ipg{\AA \postcov \AA^*\vec{e}_i}{\vec{e}_i}{\Mu},
   \label{eq:traceAoptimal}
\end{equation}
where $\{\vec{e}_i\}_{i=1}^\nuu$ is an orthonormal basis of $\Ru$.
Direct computation of \cref{eq:traceAoptimal} requires $\nuu$ matrix vector
products with $\postcov$.  This is prohibitively expensive in practice---the
discretized parameter dimension is typically on the order of thousands or tens
of thousands.  Moreover, computing an application of $\postcov$ using an iterative method (e.g., CG or an appropriate Krylov subspace method) is expensive due to the need for repeated applications of $\FF$
and $\FF^*$, i.e., forward and adjoint PDE solves. Note also that evaluating
the trace in~\cref{eq:traceAoptimal} requires $\nuu$ matrix-vector products with
the adjoint of the goal-operator $\AA$; i.e., $\nuu$ additional PDE solves. 
This section is about addressing these challenges. 

We first discuss fast methods for 
computing applications of 
$\postcov$ on vectors. Two approaches are considered. 
The first one, which we refer to as the \emph{spectral approach}, 
relies on a low-rank spectral decomposition of the prior-preconditioned 
data-misfit Hessian. The second approach, which we call 
the \emph{frozen low-rank forward operator approach}, 
uses a low-rank SVD of the prior-preconditioned forward 
operators. As we will see, the latter approach has the benefit of being design-independent and therefore is more suitable for our purposes.
\subsubsection{Fast $\postcov$ apply using the spectral approach}
\label{sec:hessian_lora}
As noted previously, the prior-preconditioned Hessian~\cref{eq:prior_precond_Hm}
often can be represented accurately using a low-rank spectral 
decomposition:
\begin{equation}\label{eq:lowrank_Hm}
\mat{\tilde H}_{\text{misfit}} \approx \mat{V}_h \mat{\Lambda}\mat{V}_h ^*.
\end{equation} 
Here, $\mat{\Lambda} \in \R^{{\kh} \times {\kh}}$ is a diagonal
matrix with the leading eigenvalues of $\mat{\tilde H}_{\text{misfit}}$ on its diagonal and 
$\mat{V}_h  \in \sL(\R^{{\kh}}, \Rm)$ satisfies $\mat{V}_h^* \mat{V}_h = \mat{I}$,
where 
$\mat{V}_h ^* = \mat{V}_h ^\top \mat{M}$.
This decomposition can be
obtained iteratively using  
the Lanczos method \cite{Lanczos1950} or through
approximation techniques such as a randomized Nystr\"om decomposition \cite{Gittens2013RevisitingTN}. This
generally requires at least ${\kh}$ applications of $\mb{F}$ and its
adjoint $\mb{F}^*$. Note that the data-misfit Hessian \cref{eq:lowrank_Hm} and its low-rank approximation depend on the design vector $\vec{w}$ by \cref{eq:prior_precond_Hm}. Moreover, by \cref{eq:prior_precond_Hm}, it follows that the rank of the data-misfit Hessian does not exceed $n_s$.

Once the low-rank approximation~\cref{eq:lowrank_Hm} is available, 
we can use the Sherman-Morrison-Woodbury identity
\cite{Hager1989UpdatingTI} to obtain
\[
(\hessmis + \mat{I})^{-1} \approx (\mat{V}_h \mat{\Lambda}\mat{V}_h ^* + \mat{I})^{-1} = \mat{I} - \mat{V}_h \mat{D}\mat{V}_h ^*,
\] 
where $\mat{D} \eqdef \diag(\lambda_1/(\lambda_1 +
1), \lambda_2/(\lambda_2 + 1), \dots, \lambda_{\kh}/(\lambda_{\kh} + 1))$. This
yields the following approximation for the posterior covariance,
\begin{equation}\label{eq:postcov_spectral}
   \postcov\approx \postcovh \eqdef \prcov - \prcov^{1 / 2}\ \mat{V}_h \mat{D}\mat{V}_h ^* \ \prcov^{1 / 2}.
\end{equation}
This approximation of $\postcov$ has been used in various previous 
studies in Bayesian analysis and OED; see, e.g.,~\cite{Bui-Thanh2013,Alexanderian2017EfficientDD}.

\subsubsection{Fast $\postcov$ apply using the frozen 
low-rank forward operator approach}
\label{sec:frozen_lora}
Consider the $\vec{w}$-dependent prior-preconditioned data-misfit Hessian
\[
\hessmis(\vec w) = \prcov^{1/2}\FF^* \mat{W}_\sigma \FF\prcov^{1/2} = \tilde{\FF}^* \mat{W}_\sigma \tilde{\FF}.
\]
Here, $\tilde{\FF} \eqdef \FF\prcov^{1/2}$ is the 
prior-preconditioned forward operator. We utilize the generalized randomized SVD algorithm
\cite{Halko2009FindingSW,Saibaba2021GeneralizedSVD} with target rank $\kf$ and modest oversampling parameter ($p = 5$) to compute $\tilde{\FF}
\approx \mb{U}_F \mb{V}_F^*$, where $\mb{U}_F \in \R^{\ny \times \kf}$ is a
matrix with orthonormal columns, and $\mb{V}_F \in \sL(\R^{\kf}, \Rm)$ encodes
the right singular vectors and singular values. Substituting this
approximation into \cref{eq:postcov_hess} yields
\[
   \postcov\approx \postcovf \eqdef \prcov^{1/2}\left( \mat{V}_F\mat{U}_F^\top\mat{W}_\sigma \mat{U}_F \mat{V}_F^* + \mat{I}\right)^{-1}\prcov^{1/2}.
\]
For notational convenience, we define $\mat{C}_w  \in \R^{k_f \times k_f}$ by 
\begin{equation}
   \mat{C}_w \eqdef \mat{U}_F^\top\mat{W}_\sigma \mat{U}_F, 
\end{equation}
and use 
$\postcovf = \prcov^{1/2}\left( \mat{V}_F \mat{C}_w \mat{V}_F^* + \mat{I}\right)^{-1}\prcov^{1/2}$.
Using the Sherman-Morrison-Woodbury identity, we have 
$\left( \mat{V}_F \mat{C}_w \mat{V}_F^* + \mat{I}\right)^{-1} = \mat{I} - \mat{V}_F\left( \mat{C}_w\mat{V}_F^* \mat{V}_F + \mat{I} \right)^{-1} \mat{C}_w\mat{V}_F^*$.
Therefore,
\begin{equation}
   \postcovf = \prcov - \prcov^{1/2}\big[ \mat{V}_F\left( \mat{C}_w\mat{V}_F^* \mat{V}_F + \mat{I} \right)^{-1} \mat{C}_w\mat{V}_F^* \big]\prcov^{1/2}.
   \label{eq:frozen_postcov}
\end{equation}
Note that as opposed to \cref{eq:postcov_hess}, which involves the inverse of an
operator in $\R^{\nm \times \nm}$, \cref{eq:frozen_postcov} only requires the
inverse of a matrix in $\R^{\kf \times \kf}$.  In practice, typically, $\kf$ is
much smaller than the discretized parameter dimension $N$.  Moreover, once the
discretization of the problem is sufficiently fine to resolve the dominant
singular values of $\tilde\FF$, the numerical rank $k_f$ does not grow upon
successive mesh refinements.

\subsubsection{Fast computation of the cOED criterion}
\label{sec:fast_cOED}
The frozen low-rank forward operator approach discussed above is particularly
well-suited to the present Bayesian inverse problem setup. Thus, we first present 
a procedure for 
fast computation of the cOED criterion using this
approach.  As seen shortly, at the up-front cost of precomputing the low-rank
approximation of $\tilde{\FF}$, we can solve the cOED optimization problem with
no need for $\FF$ and $\FF^*$ applies, which require forward and adjoint PDE solves. Moreover, by precomputing $k_f$
applications of the goal operator $\AA$, the cOED criterion
can be estimated 
with no additional solves of the PDEs governing the optimal control problem or the 
corresponding adjoint PDEs. It is noteworthy that access to the adjoint of the goal operator, $\AA^*$, is not required in this approach. 

Using the low-rank SVD of $\tilde\FF$, we obtain the approximation
\[
\Psi^{cA} = \tr(\AA   \postcov \AA^*) \approx \tr(\AA \  \postcovf \ \AA^*) =: \widetilde{\Psi}^{cA}_{\mathrm{f}}.\]
Substituting the expression from \cref{eq:frozen_postcov}, we have 
\begin{align*}
   \AA \  \postcovf \ \AA^* &= \AA \left( \prcov - \prcov^{1/2}\left[ \mb{V}_F\left( \mb{C}_w\mb{V}_F^* \mb{V}_F + \mb{I} \right)^{-1} \mb{C}_w\mb{V}_F^* \right]\prcov^{1/2} \right) \AA^* \\
   &= \AA\prcov\AA^* - \widetilde{\mat{A}}\left( \mb{C}_w\mb{V}_F^* \mb{V}_F + \mb{I} \right)^{-1} \mb{C}_w \widetilde{\mat{A}}^*,
\end{align*}
where $\widetilde{\mat{A}} \eqdef \AA \prcov^{1/2}\mb{V}_F$. 
A straightforward algebraic manipulation yields
\begin{equation}
   \widetilde{\Psi}^{cA}_{\mathrm{f}} = \tr(\AA \  \postcovf \ \AA^*) = \tr(\AA\prcov\AA^*) - \tr\left[ \left( \mb{C}_w\mb{V}_F^* \mb{V}_F + \mb{I} \right)^{-1} \mb{C}_w \widetilde{\mat{A}}^* \widetilde{\mat{A}}\right].
   \label{eq:frozen_psi}
\end{equation}
Recall that the goal of cOED problem is to minimize $\widetilde{\Psi}^{cA}_{\mathrm{f}}$ over the design $\bs{w}$. Since $\tr(\AA\prcov\AA^*)$ does 
not depend on $\vec w$, we can instead focus on minimizing 
\begin{equation}
   \widetilde{\Psi}^{cA}_{\mathrm{f-}} \eqdef - \tr\left[ \left( \mb{C}_w\mb{V}_F^* \mb{V}_F + \mb{I} \right)^{-1} \mb{C}_w \widetilde{\mat{A}}^* \widetilde{\mat{A}}\right].
   \label{eq:trMaxCrit}
\end{equation}
Note that we have expressed the design criterion
$\widetilde{\Psi}^{cA}_{\mathrm{f-}}$ as the trace of a $k_f$-dimensional
operator. The expression in \cref{eq:trMaxCrit} involving the trace can be computed efficiently with no further need 
for applications of $\FF$ and its adjoint.  Furthermore, we may also precompute
the operator $\widetilde{\mat{A}} = \AA \prcov^{1/2}\mb{V}_F$ by performing
$k_f$ applications of the goal operator $\AA$.
With the requisite pre-computations in place, all subsequent 
evaluations of $\widetilde{\Psi}^{cA}_{\mathrm{f-}}$ can be performed 
with no need for further PDE solves. 
We summarize the steps for computing $\widetilde{\Psi}^{cA}_{\mathrm{f-}}$
in~\Cref{proc:frozen}.

\begin{procedure}{Frozen Low-Rank Evaluation of cOED Objective}
   \label{proc:frozen}
   \noindent\textit{Offline}
   \begin{description}
      \item Approximate $\tilde{\FF} \approx \mb{U}_F \mb{V}_F^*$ 
      \hfill\Comment{cost: $\mc{O}(k_f)$ applies of $\FF$ and $\FF^*$}
      \item Precompute $\widetilde{\mat{A}} = \AA \prcov^{1/2}\mb{V}_F$ 
      \hfill\Comment{cost: $k_f$ applications of goal operator $\AA$}
      \item Compute $\widetilde{\mat{A}}^*\widetilde{\mat{A}}$ and $\mb{V}_F^* \mb{V}_F$
   \end{description}

   \noindent\textit{Online}
   \begin{description}
      \item Compute $\mb{C}_w = \mb{U}_F^\top\mb{W}_\sigma \mb{U}_F$ and evaluate $\cref{eq:trMaxCrit}$ 
   \end{description}
\end{procedure}

Thus far, we have focused on computing 
the cOED criterion using the frozen low-rank forward operator 
approach. 
The spectral approach discussed in \Cref{sec:hessian_lora} can be  
used to derive an approximate criterion analogous to
\cref{eq:trMaxCrit}. One important downside to 
the spectral approach is that the low-rank
approximation of the prior-preconditioned data-misfit Hessian 
depends on the design $\vec w$. Therefore, for each evaluation of the
cOED objective, $\mc{O}(k_h)$ forward/adjoint PDE solves for the Hessian 
approximation are necessary. Note, however, that $k_h$ is typically 
much smaller than the numerical rank $k_f$ of $\tilde{\FF}$, 
which was considered in the frozen low-rank approach. Namely, 
$k_h$ is bounded by the number of active sensors in a 
design; i.e., for a given design $\vec w$, $k_h \leq \| \vec{w}\|_1$. 
Given $\vec w$, we can compute $\widetilde{\mat{A}}_h = \AA
\prcov^{1/2}\mb{V}$ to obtain the approximation, 
\[\mat{A}\postcovh\mat{A}^* = \mat{A}\prcov\mat{A}^* - \widetilde{\mat{A}}_h\mat{D}\widetilde{\mat{A}}_h^*.\] 
As before, we can ignore the design-invariant terms when optimizing 
the cOED criterion---we can instead focus on 
minimizing 
\[
\widetilde{\Psi}^{cA}_{\mathrm{h-}}(\vec w) \eqdef 
-\tr(\widetilde{\mat{A}}_h\mat{D}(\vec w)\widetilde{\mat{A}}_h^*).
\] 
We summarize the cost of the two approaches when using greedy optimization in terms of the number of PDE solves in \Cref{tab:comp_cost}. Note that we exclude the cost associated with applying prior covariance operators, as they are often much smaller than the cost of applying $\AA$, $\FF$, and their adjoints. It is notable, nevertheless, that the frozen approach does not require any online applications of the prior covariance operator either.
\begin{table}[h]
   \centering
   \begin{tabular}{c|c|c}
      \textbf{Method} & \textbf{Total Offline Cost} & \textbf{Total Online Cost} \\\hline 
      Spectral approach (\Cref{sec:hessian_lora}) & --- & $\mathcal{O}(k_h \cdot kn_s)$ \\
      Frozen approach (\Cref{sec:frozen_lora}) & $\mathcal{O}(k_f)$ & --- \\
   \end{tabular}
   \caption{Computational cost of the methods (in number of PDE solves)}
   \label{tab:comp_cost}
\end{table}

To evaluate the entire expression for the 
cOED criterion $\Psi^{cA}$, with either approach, we need the
design-invariant quantity $\tr(\AA\prcov\AA^*)$. Although not necessary for
optimization, efficient matrix-free evaluation of this quantity is needed
for post-inference analysis (see \Cref{sec:ctrlObjEstimation}). 
A similarity transform of $\AA\prcov\AA^*$ yields the symmetric,
positive-definite matrix $\Mu^{1 / 2}\AA\prcov\AA^*\Mu^{-1 / 2}$, 
which is convenient to work with. Since the trace
is invariant under a similarity transformation, we 
focus on computing
$\tr(\Mu^{1 / 2}\AA\prcov\AA^*\Mu^{-1 / 2})$.
In this article, we employ the structure-exploiting $\alg{XNysTrace}$ algorithm
proposed in \cite{Epperly2023XTraceMT} for the efficient estimation of this
trace. Since this quantity is design-invariant, it only needs to be computed once. 
\subsection{Efficient estimation of expectation and variance of $\ctrlObjs$}
\label{sec:ctrlObjEstimation} 
Here, we discuss fast computation of the posterior mean 
and variance of the control objective. Let us 
first consider $\Exp(\ctrlObjs | \obs)$, which can be represented 
as~\cref{eq:expAopt}.
Note that computing this expected value 
requires the computation of the criterion $\Psi^{cA}$, the
MAP estimate $\dparmap^\obs$, and the objective $\ctrlObjs(\dparmap^\obs)$. Of
these quantities, the computational cost of estimating $\Exp(\ctrlObjs | \
\obs)$ is dominated by the estimation of the cOED criterion, for which efficient
techniques are outlined in \Cref{sec:computeAopt}. 

We next consider the expression for
the variance of the control objective from \cref{eq:ctrlObjMeanVar}.
The key challenge for computing this variance expression 
is computing the trace of $(\AA\postcov\AA^*)^2$.
This trace can be estimated efficiently by
utilizing the approximate posterior covariance $\postcovf$. 
In particular, note
that 
\begin{align*} 
   \tr\big( (\AA\postcovf\AA^*)^2 \big) 
   &= \tr\Big( \big(
      \AA\prcov\AA^* - \widetilde{\mat{A}}\left( \mb{C}_w\mb{V}_F^* \mb{V}_F + \mb{I}
      \right)^{-1} \mb{C}_w \widetilde{\mat{A}}^* \big)^2 \Big) \\ 
   &= 
   \tr\big((\AA\prcov\AA^*)^2 \big) + 
      \tr\Big( \big((
      \mb{C}_w\mb{V}_F^* \mb{V}_F + \mb{I})^{-1} \mb{C}_w
      \widetilde{\mat{A}}^*\widetilde{\mat{A}} \big)^2 \Big) \\ 
   &\qquad - 2\tr\big((\mb{C}_w\mb{V}_F^* \mb{V}_F + \mb{I})^{-1} \mb{C}_w
   \mat{G}^* \mat{G}\big), %
\end{align*} 
where $\mat{G} \eqdef \prcov^{1 / 2}\mat{A}^*\widetilde{\mat{A}}$. We
compute the design-independent additive quantities in the above expression in an
offline efficient matrix-free manner similar to as described in the end of
\Cref{sec:computeAopt}. In fact, computations for estimating
$\tr(\AA\prcov\AA^*)$ can be reused for estimating $\tr\big((\AA\prcov\AA^*)^2\big)$. Specifically, we can reuse the randomized Nystr\"om approximation of $\AA\prcov\AA^*$ computed in \alg{XNysTrace} to obtain an approximation of $(\AA\prcov\AA^*)^2$. The trace of this approximation can be used as an estimate to $\tr\big((\AA\prcov\AA^*)^2\big)$, avoiding additional PDE solves.

\section{Numerical results}
\label{sec:num_res}
In this section, we present our numerical experiments pertaining to the
cOED framework. 
The motivating example of a two-dimensional heat transfer application
from \Cref{sec:motivation} is used as a testbed for the proposed approach. 
In
\Cref{sec:num_res_ip,sec:num_res_ocp}, we describe the discretized inverse and
optimal control problems for this model problem. 
We next study the low-rank approximation of the prior-preconditioned 
parameter-to-observable map and its use for fast computation of 
the cOED criterion in~\Cref{sec:num_valid}.
This analysis lays the groundwork for \Cref{sec:num_res_oed}, in which we solve the OED
problem and demonstrate the effectiveness of cOED in comparison with classical
approaches.
\subsection{Inverse problem} \label{sec:num_res_ip} Consider the
problem of reconstructing the unknown source parameter $m$ 
from sensor measurements of $\invState$ in~\cref{eq:noControlHeat}. To discretize the problem, we employ linear triangular continuous
Galerkin finite elements with $\nm = \nuu = 961$ degrees of freedom (a $30
\times 30$ mesh), yielding discretized parameters $\dpar$ and $\bs{\hat{u}}$.
Following the approach in \cite{Bui-Thanh2013}, we define a prior distribution
$\priorm = \GM{\bs{0}}{\prcov}$, where the prior covariance operator is the
discretization of a squared inverse of an elliptic differential operator. In
particular, we define $\prcov^{1 / 2} \eqdef \left( \alpha\mat{K} + \beta\M
\right)^{-1}\M$, where $\M$ and $\mat{K}$ represent the finite-element mass and
stiffness matrices, respectively. Here, we use $\alpha = 0.1$
and $\beta = 1$. To provide some insight, 
we show the prior variance field in \Cref{fig:num_res_map_priorvar}. 

\begin{figure}[ht]
   \centering
   \includegraphics[width=0.45\textwidth]{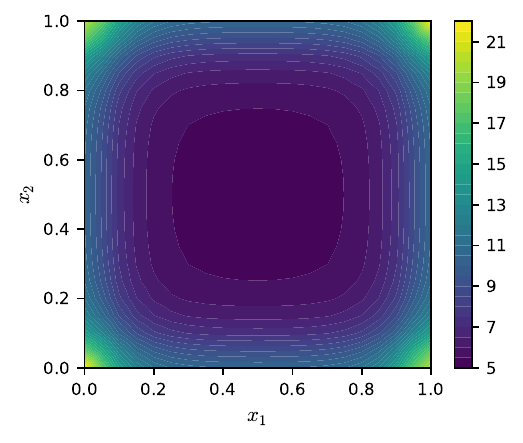}
\caption{Prior pointwise variance field}
\label{fig:num_res_map_priorvar}
\end{figure}

To facilitate the numerical experiments that follow, we generate synthetic data
by  manufacturing a \textit{ground-truth} inversion parameter,
$\bs{m}_{\text{true}}$, as shown in \Cref{fig:num_res_one} (left). Using
$\dpartrue$, we solve the forward model \cref{eq:noControlHeat} to obtain
sensor data $\bs{y}$ on a uniform grid of $\ny = 81$ candidate sensor
locations, as visualized in \Cref{fig:num_res_one}~(right). 
To emulate
measurement noise, we add to this $\obs$ a random draw 
$\bs{\eta} \sim
\GM{\bs{0}}{\noisecov}$. Here, we use $\noisecov = \sigma^2 \mb{I}_{\ny}$,
where $\sigma \eqdef \frac{\delta}{\sqrt{\ny}} \|\mb{F}\bs{m}_{\text{true}} +
\bs{b}\|_2$ with $\delta = 0.01$, resulting in a (normalized) noise level of $1\%$. 

\begin{figure}[ht]
      \centering
      \includegraphics[width=0.4\textwidth]{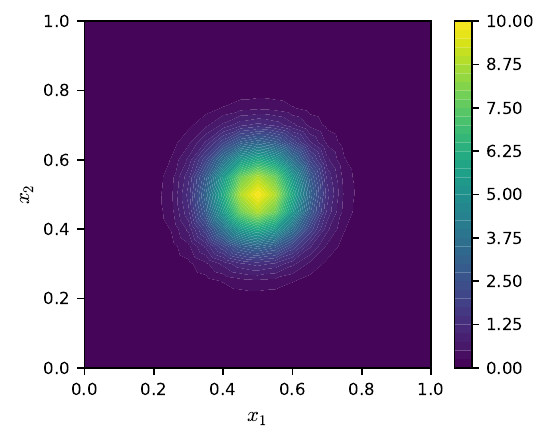}
      \includegraphics[width=0.4\textwidth]{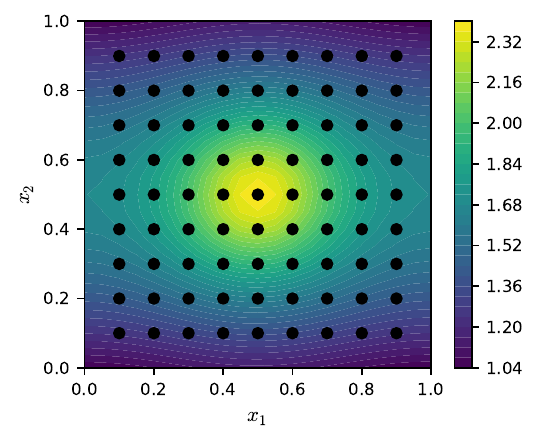}
   \caption{Ground-truth inversion parameter $\dpartrue$ (left) and initial condition $\vec{\invState}$ (right).}
   \label{fig:num_res_one}
\end{figure}

We solve the inverse problem using synthetic measurement data obtained at all candidate sensor locations, and in \Cref{fig:num_res_map_postvar}, we plot the resulting posterior mean $\dparmap$ and the pointwise posterior variance field. 

\begin{figure}[ht]
   \centering
   \includegraphics[width=0.45\textwidth]{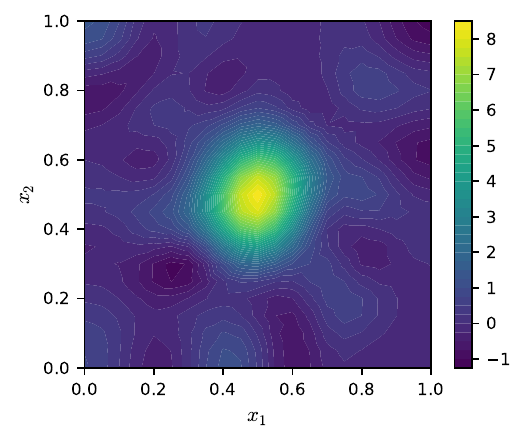}
   \quad
   \includegraphics[width=0.45\textwidth]{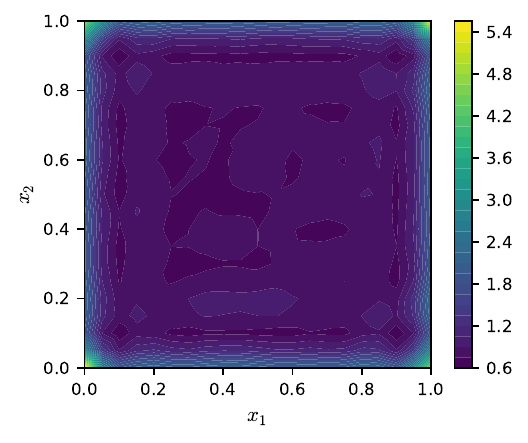}
\caption{Posterior mean $\dparmap$ (left) and pointwise variance (right).}
\label{fig:num_res_map_postvar}
\end{figure}

\subsection{The optimal control problem}
\label{sec:num_res_ocp}
Let us revisit the optimal control problem pertaining to the motivating
application in~\Cref{sec:motivation}. We would like to solve for an optimal control that guides the
transient advection-diffusion model towards a target state by controlling the intensity of a heat source. 
Here, we consider a uniform target state, $\bar{\bs{u}} \equiv
1$  and the velocity field $\vec{v}(\vec x) \eqdef [ -x_2 - 0.5 \; x_1 -0.5]^\top$. This solenoidal velocity field corresponds to a counterclockwise circular field around the center, $(x_1, x_2) = (0.5, 0.5)$.
We consider a time interval $[0, 1]$ discretized by $\nt = 20$
evenly spaced time steps. The forward and adjoint PDEs are solved using backward
Euler time stepping. In addition, to integrate spatio-temporal
quantities, we utilize a temporal mass matrix $\Mt$ based on the composite 
midpoint rule. 

Given this setup, we solve the optimization problem \cref{eq:OC} with regularization parameter $\beta \eqdef 10^{-5}$ to obtain a nominal control estimate, $\bs{z}^*$, corresponding to $\dparmap$. The resulting control and terminal state are shown in \Cref{fig:num_res_nomCtrl_uT}. The nominal control yields a (non-regularized) control objective of $\ctrlObjs(\dpartrue) = 0.039$, yielding a terminal state that is 83\% closer to the target state compared to the initial state.
\begin{figure}[ht]
   \centering
   \includegraphics[width=0.48\textwidth]{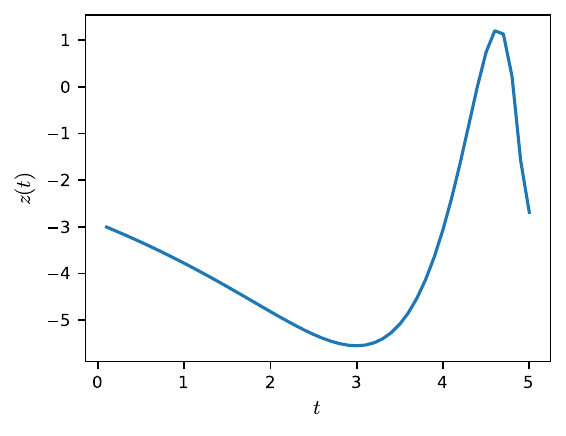}
   \quad
   \includegraphics[width=0.45\textwidth]{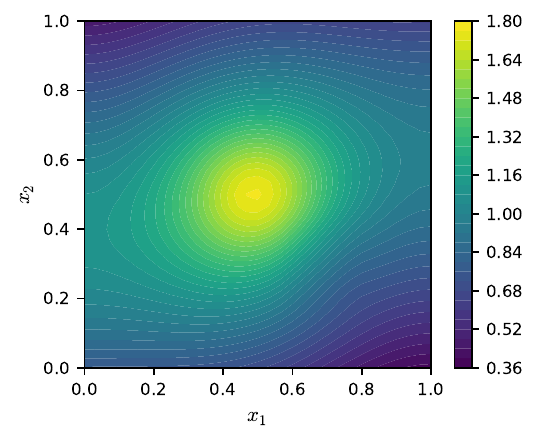}
\caption{Nominal control $\ctrln$ (left) and terminal state $\UTs(\dparmap, \ctrln)$ (right).}
\label{fig:num_res_nomCtrl_uT}
\end{figure}

\subsection{Spectral analysis of prior-preconditioned forward operator}
\label{sec:num_valid}
In this section, we study the spectral decay of the prior-preconditioned forward operator. This provides insight regarding the accuracy of the frozen low-rank approximation technique in the computation of the cOED criterion and the uncertainty measures in \Cref{sec:computational_methods}.
\begin{figure}[ht]
   \centering
   \includegraphics[width=0.45\textwidth]{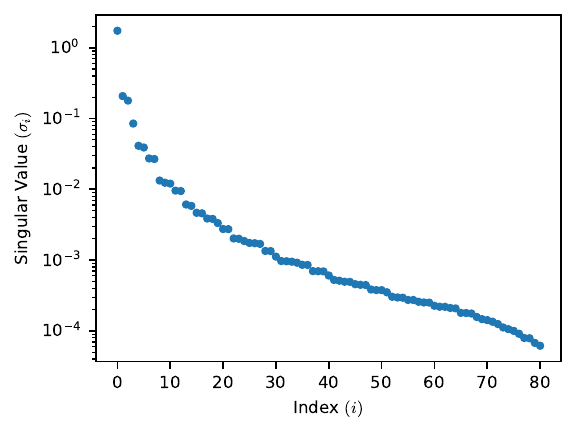}
   \quad
   \includegraphics[width=0.45\textwidth]{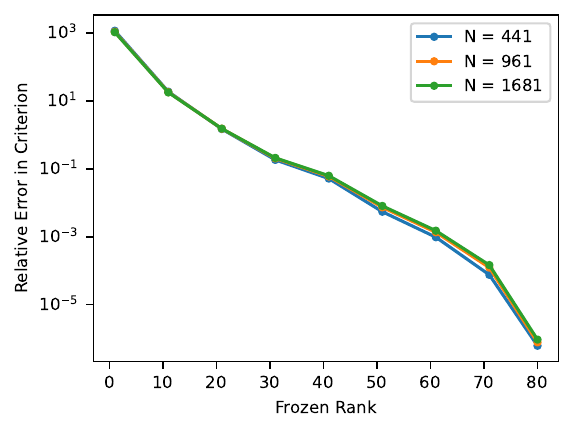}
\caption{Singular values of $\tilde\FF$ (left) and relative error of $\Psi^{cA}$ over $\kf$ (right).}
\label{fig:num_res_fh_spectral}
\end{figure}

\Cref{fig:num_res_fh_spectral} (left) shows rapid decay of the singular values
corresponding to the prior-preconditioned forward operator. Recall that the rank
of $\tilde\FF$ is limited by the dimension of candidate sensors, which is
already typically much smaller than the discretized parameter dimension.
Furthermore, the spectral decay of this operator observed here validates that an
even lower-rank approximation can capture the dominant features of the operator. 

In addition to visualizing the spectral decay of $\tilde\FF$, we analyze the
impact of rank-$\kf$ truncation on the accuracy of the estimated cOED criterion
in~\Cref{fig:num_res_fh_spectral} (right). This is analyzed by computing the relative error of the estimated cOED criterion using~\Cref{proc:frozen} with the cOED criterion computing the exact posterior. We observe a near-exponential decay
of the error, which reinforces that modest low-rank approximations can produce
accurate estimates of the criterion. In \Cref{fig:num_res_fh_spectral} (right),
we also plot the relative error of $\Psi^{cA}$ across different mesh resolutions.  The results show
consistent criterion estimates across all resolutions. 

\subsection{Optimal experimental design}
\label{sec:num_res_oed}
In this section, we investigate classical and cOED
techniques and demonstrate the effectiveness of the control-oriented approach. 

\subsubsection{Solving the OED Problem}
\label{sec:num_res_oed_solve}
We consider a fixed budget of $k = 13$ sensors.  In the experiments that follow,
we use the greedy approach from \Cref{alg:greedy} to select $13$ sensor
locations out of the candidate sensors depicted in
\Cref{fig:num_res_one}~(right) that minimize a specified design criteria. 
For fast evaluations of the criteria in each iteration, we utilize the frozen
low-rank approximation approach from \Cref{sec:computational_methods} (see
\Cref{proc:frozen}). The resulting classical A-optimal and control-oriented optimal
designs are displayed in \Cref{fig:num_res_designs}. 

\begin{figure}[ht]
   \begin{subfigure}{.5\textwidth}
      \centering
      \includegraphics[width=1.1\textwidth]{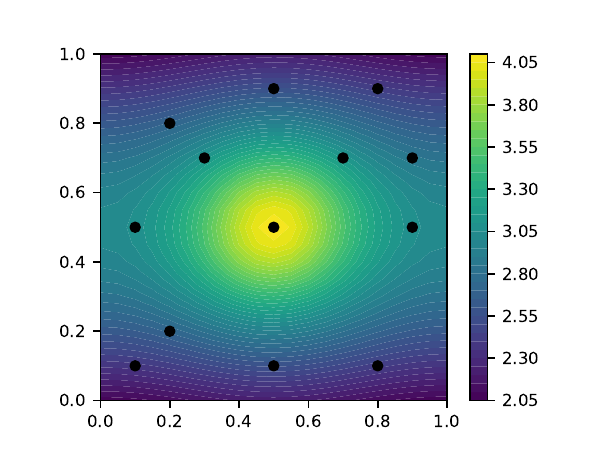}
      \caption{Classical (min. $\tr(\postcov)$)}
      \label{fig:num_res_design_classical}
   \end{subfigure}
   \begin{subfigure}{.5\textwidth}
      \centering
      \includegraphics[width=1.1\textwidth]{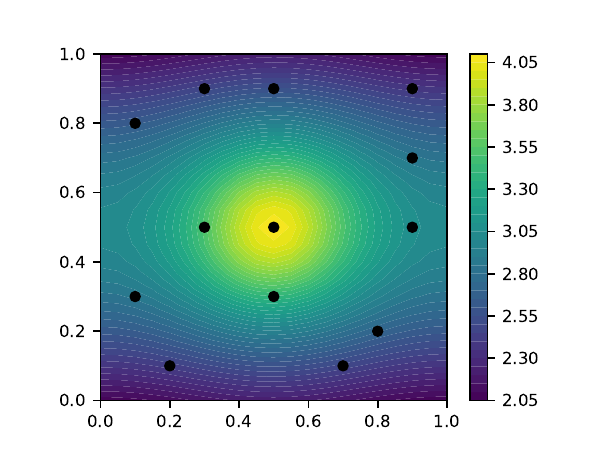}
      \caption{Control-oriented (min. $\tr(\AA\postcov\AA^*)$)}
      \label{fig:num_res_design_coed}
   \end{subfigure}
   \caption{Optimal experimental designs for the heat transfer problem.}
   \label{fig:num_res_designs}
\end{figure} 

Recall that the classical A-optimal criterion minimizes average posterior
variance of the inversion parameter. 
On the other hand, as
discussed in \Cref{sec:dataDrivenOC}, the control-oriented approach instead
seeks a reconstruction that minimizes uncertainty in the terminal state. In \Cref{tab:num_res_oed_metrics}, we compare the
measures of posterior uncertainty induced by utilizing classical and
control-oriented optimal designs. 
The reported results demonstrate that the control-oriented optimal placement of
sensors can yield significant benefits over classical approaches.  In
particular, we observe that when using cOED, the
average posterior variance of the terminal state is 19\% smaller than that
obtained using a classical A-optimal sensor placement.

\begin{table}[ht]
   \centering
   \def\arraystretch{1.25}
   \begin{tabular}{lrr}
   \toprule
   \textbf{Design}  & $\tr[\postcov]$  & $\tr[\AA\postcov\AA^*]$ \\ 
   \midrule
   Classical        & $\mathbf{0.590}$ & $2.26\times10^{-3}$  \\   Control-oriented & 0.616            & $\mathbf{1.83\times10^{-3}}$\\
   \bottomrule
   \end{tabular}
   \caption{Comparison of classical A-optimal and control-oriented sensor placements.}
   \label{tab:num_res_oed_metrics}
\end{table}

\begin{figure}[ht]\centering
\includegraphics[width=.5\textwidth]{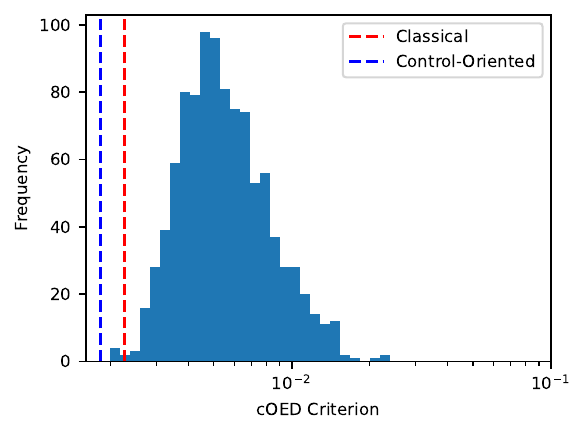}
\caption{Comparison of cOED criterion for random, classical and control-oriented designs.}
\label{fig:num_res_hists}
\end{figure}
It is also informative to consider the optimized designs against random sensor
selections. To this end, we generate a sample of 1000 random sensor placements
with the same number of sensors and compare them to the optimized designs; 
see \Cref{fig:num_res_hists}. Note that although there are combinatorially many 
possible design choices, these random sensor placements serve as a feasible and representative sample for comparison.
The results demonstrate that 
both classical and control-oriented designs outperform 
nearly all random designs. 
Furthermore, as noted previously, we observe that the control-oriented design 
outperforms the classical A-optimal design. It is worth highlighting that the benefit of cOED can be much larger in many practical applications. For example, applying cOED to even the same control problem studied here but equipped with a slightly stronger advection field can yield much larger (e.g., $>50\%$) improvement in the cOED criterion compared to a classical A-optimal design. As seen in \Cref{fig:num_res_hists_v2}, we observe roughly $60\%$ improvement when the advection field $\bs{v}(\bs{x})$ is scaled by a factor of five. Moreover, the performance also varies depending on the sensor budget $k$ chosen. To analyze the effect of the sensor budget, we compare the cOED criterion evaluated at the control-oriented and classical designs for varying sensor budgets in \Cref{fig:num_sensors}. For better visualization, we display the relative difference of the cOED criterion with sensor budget $k$ to the criterion evaluated with all candidate sensors active.

\begin{figure}[ht]
   \begin{subfigure}{.45\textwidth}
      \centering
      \includegraphics[width=1\textwidth]{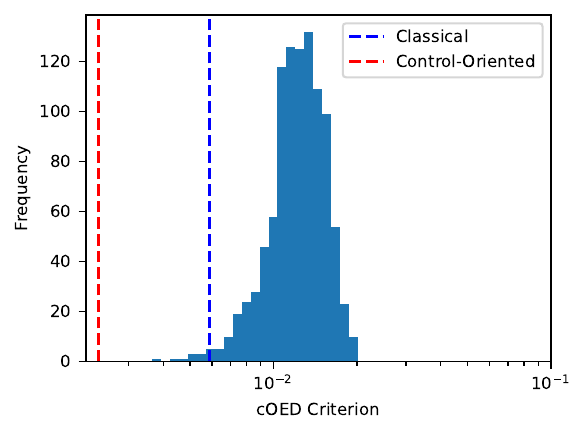}
      \caption{Histogram over random designs}
      \label{fig:num_res_hists_v2}
   \end{subfigure}
   \hspace{1em}
   \begin{subfigure}{.45\textwidth}
      \centering
      \includegraphics[width=1\textwidth]{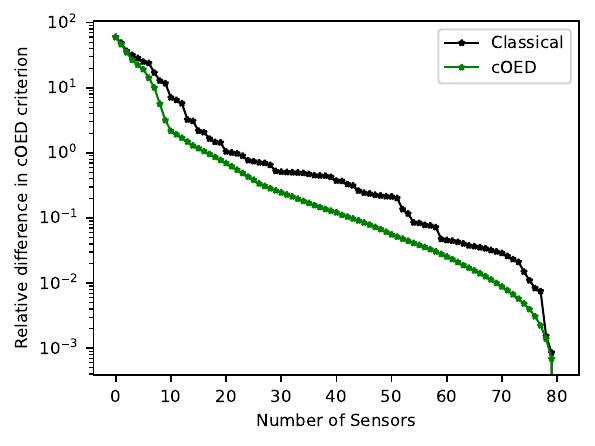}
      \caption{Comparison over sensor budget $k$}
      \label{fig:num_sensors}
   \end{subfigure}
   \caption{Comparison of cOED, classical, and random designs (scaled advection).}
   \label{fig:scaled_advection_case}
\end{figure}

\subsubsection{Control-Oriented Uncertainty Quantification}
\label{sec:num_res_oed_uq}
Here, we further study the effectiveness of control-oriented 
sensor placements in reducing the uncertainty in the terminal state.
In \Cref{fig:pointwise_var_coed},
we report the 
pointwise posterior variance field of the terminal state; this is compared with an evenly-spread sensor placement \Cref{fig:pointwise_var_rand_0}, as well as two random designs with an equivalent number of sensors
\Cref{fig:pointwise_var_rand_1,fig:pointwise_var_rand_2}. With both random and evenly-spaced sensor placements, we observe elevated amounts of posterior uncertainty in parts of the domain compared to cOED.
The results indicate a significant reduction in the posterior uncertainty with the
control-oriented approach.

\begin{figure}[ht]
   \centering
   \hspace{3em}
   \begin{minipage}{0.8\textwidth}
      \begin{subfigure}{.35\textwidth}
         \centering
         \includegraphics[width=\textwidth]{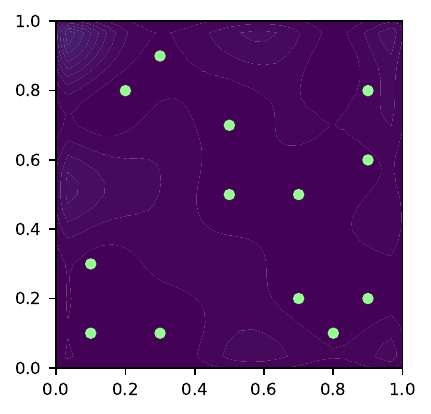}
         \caption{}
         \label{fig:pointwise_var_coed}
      \end{subfigure}
      \hspace{3em}
      \begin{subfigure}{.35\textwidth}
         \centering
         \includegraphics[width=\textwidth]{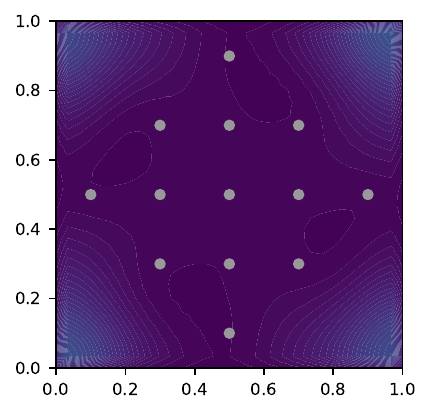}
         \caption{}
         \label{fig:pointwise_var_rand_0}
      \end{subfigure}
      \\
      \begin{subfigure}{.35\textwidth}
         \centering
         \includegraphics[width=\textwidth]{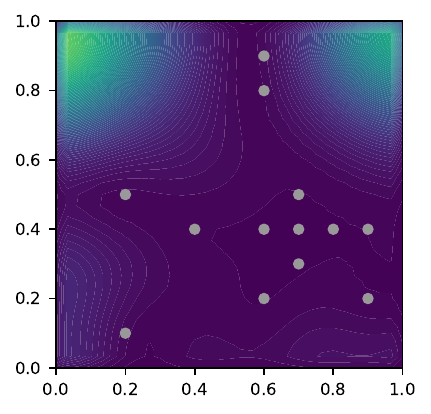}
         \caption{}
         \label{fig:pointwise_var_rand_1}
      \end{subfigure}
      \hspace{3em}
      \begin{subfigure}{.35\textwidth}
         \centering
         \includegraphics[width=\textwidth]{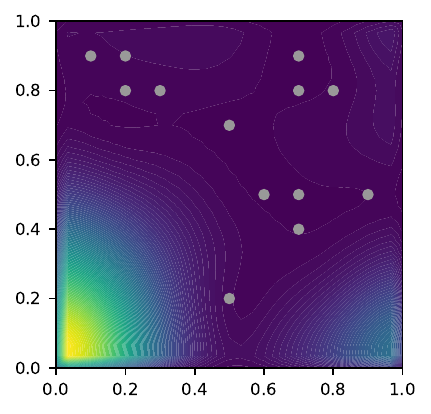}
         \caption{}
         \label{fig:pointwise_var_rand_2}
      \end{subfigure}
   \end{minipage}
   \hspace{-3em}
   \begin{subfigure}{.1\textwidth}
      \centering
      \includegraphics[height=4\linewidth]{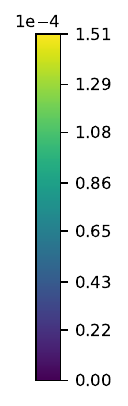}
      \vspace{-6em}
      \label{fig:pointwise_var_cbar}
   \end{subfigure}
   \caption{Posterior uncertainty in $\UT$ with cOED (a), evenly-spaced  designs (b), and random designs (c-d).}
   \label{fig:post_terminal_uncertainty}
\end{figure} 

We next consider 
the expectation and variance of the
control objective with respect to the posterior; see \Cref{sec:uq}. 
It is important to note that these posterior quantities rely on data and therefore must be interpreted with caution. In light of this, we consider a reachable target state $\ubar$ in the experiments that follow. Nevertheless, in the present experiment, cOED provides an expected control objective and a variance that is significantly better than
almost all evaluated random designs as seen in \Cref{fig:obj-exp-rand}. This suggests that cOED
can potentially be effective at aiding reduction of the control objective, 
as well as reduced uncertainty in the control objective. This is in
line with our theoretical observations in \Cref{sec:uq}.

\begin{figure}[ht]
   \centering
   \includegraphics[width=0.45\textwidth]{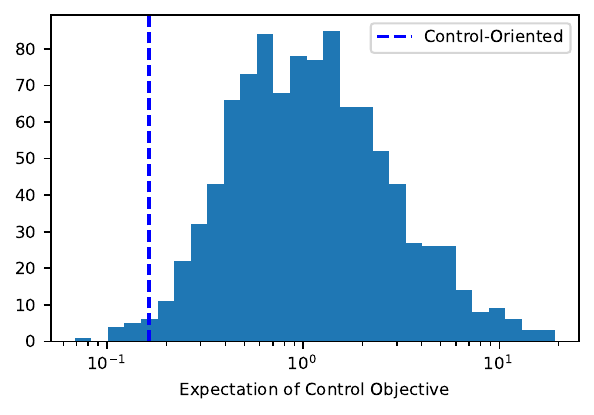}
   \quad
   \includegraphics[width=0.45\textwidth]{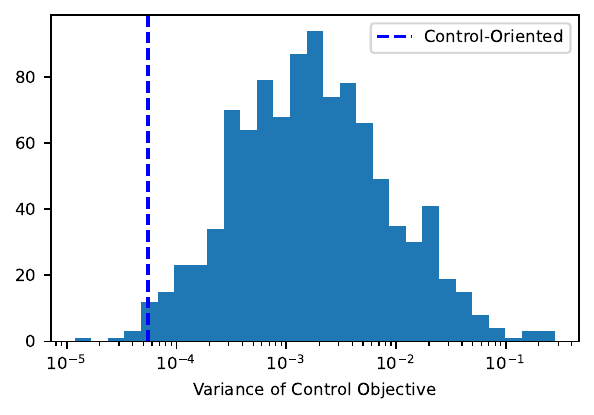}
   \caption{$\Exp(\ctrlObjs | \ \obs)$ (left) and $\Var(\ctrlObjs | \ \obs)$ (right) obtained 
   using the control-oriented design as well as random designs.}
   \label{fig:obj-exp-rand}
\end{figure}

\section{Conclusions}
\label{sec:conclusion}
In this article, we have developed a mathematical and computational framework
for control-oriented optimal sensor placement. The focus is on problems
involving control of linear PDEs with linear dependence on inversion parameters.
The proposed approach takes the coupling of the optimal control problem and the
associated inverse problem into account to derive a control-oriented
experimental design criterion.  This framework also enables quantifying the
uncertainty in the control objective.  The proposed mathematical framework is
complemented by scalable computational methods for solving the control-oriented
OED (cOED) problem and quantifying the uncertainty in the terminal state and the
control objective.  In particular, at the up-front cost of a modest number of
PDE solves, the proposed frozen low-rank forward operator approach enables
solving the cOED problem with no need for solving the PDEs
governing the inverse and optimal control
problems, within the optimization loop.

Our numerical results demonstrate the effectiveness of the proposed strategy and 
the importance of the control-oriented approach. Specifically, the control-oriented 
optimal sensor placements outperform classical A-optimal sensor placements 
in terms of reducing the posterior uncertainty in the terminal state 
in the PDEs governing the optimal control problem. Additionally, 
the posterior distribution obtained using the control-oriented optimal 
sensor placements provide near-optimal uncertainty reduction in the control 
objective. 

There are several interesting avenues for further investigations.  In the 
first place, one can
consider sensor placement for the cases where the optimal control problem has
nonlinear dependence on the inversion parameters or the cases where the associated
inverse problem is nonlinear.  For example, in the case of nonlinear inverse
problems, a Laplace approximation to the posterior provides a first step in
deriving approximate control-oriented measures of posterior uncertainty that are
tractable to optimize.  To provide more accurate estimates of control-oriented
measures of posterior uncertainty, sampling-based approaches are needed. To make
such sampling approaches tractable, derivative-informed surrogate-based approaches 
considered in~\cite{CaoRoseberryGhattas24,RoseberryChenVillaEtAl24,GoChen24} 
provide attractive options. Another interesting area for further investigation is 
to consider alternative approaches for defining a control-oriented criterion. 
These include 
a control-oriented D-optimality criterion, 
defined similarly to the goal-oriented setting in~\cite{Attia2021StochasticLA},
or a risk-adapted approach such as the one in~\cite{KouriJakemanHuerta22}.

\section*{Acknowledgments}
This article has been authored by employees of National Technology \& Engineering Solutions of Sandia, LLC under Contract No. DE-NA0003525 with the U.S. Department of Energy (DOE). The employees own all right, title and interest in and to the article and are solely responsible for its contents. The United States Government retains and the publisher, by accepting the article for publication, acknowledges that the United States Government retains a non-exclusive, paid-up, irrevocable, world-wide license to publish or reproduce the published form of this article or allow others to do so, for United States Government purposes. The DOE will provide public access to these results of federally sponsored research in accordance with the DOE Public Access Plan \url{https://www.energy.gov/downloads/doe-public-access-plan}. This paper describes objective technical results and analysis. Any subjective views or opinions that might be expressed in the paper do not necessarily represent the views of the U.S. Department of Energy or the United States Government. SAND2025-01903O.

\bibliographystyle{siamplain}
\bibliography{refs}

\appendix
\section{Discretization of the target state and control variables}
\label{appdx:disc}
The discretization of the target state $u_T \in \fnsp{U}$ can be performed 
similarly
to the case of the inversion parameter, using a finite element approach.  
On the other hand, the discretization of the
control variable $z$ is problem dependent. Consider, for example,  the case
where the control is a function of time only, as is the case in the example
in~\Cref{sec:motivation}. In that case, the discretized control variable
$\vec{z}$ is obtained by considering its values at a set of time steps $\{t_1,
\ldots, t_{\nz}\}$ in which case,  $\vec{z} = [z(t_1) \; z(t_2) \; \cdots
z(t_{\nz})]^\top$.  The inner product corresponding to the discretization of 
the space $\fnsp{Z}$ of controls is obtained similarly to the case of 
the inversion parameter described above. In the case where $\fnsp{Z} = L^2(0, T)$, 
the inner product of elements in $\fnsp{Z}$ can be approximated using a quadrature 
rule with nodes at the time steps and takes the form 
$\ip{\vec{z}_1}{\vec{z}_2}_{\Mt} \eqdef \vec{z}_1^\top \Mt \vec{z}_2$,
for $\vec{z}_1$ and $\vec{z}_2$ in $\R^{\nz}$. 
In this case, $\Mt$ is a diagonal matrix with quadrature weights on its
diagonal. The discretized control space is 
$\R^\nz$ equipped with the inner product $\ip{\cdot}{\cdot}_{\Mt}$. 
While the present definition concerns the case where
$\fnsp{Z} = L^2(0, T)$, the finite-dimensional inner product space of the
discretized control variable takes a similar form 
in general. 

\section{Proofs}\label{appdx:proofs}
\subsection{Proof of \Cref{prop:ctrlObjMeanVar}}
\label{sec:proof_ctrlObjMeanVar}
We first derive 
the expression for the first and second moments of 
a quadratic functional of a Gaussian random vector.
\begin{lemma}
   Consider a multivariate normal random vector, $\vec{x} \sim \GM{\vec{\mu}}{\mat{\Sigma}}$, where $\vec\mu \in \R^n$ and $\mat\Sigma \in \R^{n \times n}$ is symmetric positive-definite (SPD). Let $\spd$ be an SPD matrix. Then the quadratic form,
   \begin{equation}
   Q(\vec{x}) = \ip{\vec{x}}{\spd\vec{x}} = \|\vec{x}\|_{\spd}^2,
   \label{eq:quadDef}
   \end{equation}
   has 
   \begin{subequations}
      \begin{align}
      \Exp\left\{ Q(\vec{x}) \right\} &= \tr(\spd\mat{\Sigma}) + \vec{\mu}^\top \spd\vec{\mu},\\
      \Var\left\{ Q(\vec{x}) \right\} &= 2\tr\left[ (\spd\mat{\Sigma})^2 \right] + 4\vec{\mu}^\top \spd\mat{\Sigma}\spd\vec{\mu}.
      \end{align} 
   \end{subequations}
   \label[lemma]{lemma:QuadraticFuncs}
\end{lemma}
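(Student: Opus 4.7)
The plan is to prove \Cref{lemma:QuadraticFuncs} via a whitening transformation that reduces the problem to moments of a quadratic form in standard normal variables. Since $\mat\Sigma$ is SPD, I would write $\vec{x} = \vec\mu + \mat\Sigma^{1/2}\vec{z}$, where $\vec{z} \sim \GM{\vec 0}{\mat I}$. Substituting this into \cref{eq:quadDef} and expanding yields the decomposition
\begin{equation*}
   Q(\vec x) = \vec\mu^\top \spd \vec\mu + 2 \vec a^\top \vec z + \vec z^\top \mat B \vec z,
   \qquad \vec a \eqdef \mat\Sigma^{1/2}\spd\vec\mu, \quad \mat B \eqdef \mat\Sigma^{1/2}\spd\mat\Sigma^{1/2},
\end{equation*}
which separates $Q$ into a constant, a linear, and a quadratic part in $\vec z$. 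Note that $\mat B$ is symmetric (in fact SPD) and that $\tr(\mat B) = \tr(\spd\mat\Sigma)$ and $\tr(\mat B^2) = \tr((\spd\mat\Sigma)^2)$ by cyclicity of the trace.

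For the expectation, I would use $\Exp[\vec z] = \vec 0$ to kill the linear term, and $\Exp[\vec z^\top \mat B \vec z] = \tr(\mat B)$ (immediate from $\Exp[\vec z \vec z^\top] = \mat I$) for the quadratic term. This gives the claimed $\Exp\{Q\} = \vec\mu^\top \spd \vec\mu + \tr(\spd\mat\Sigma)$.

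For the variance, the key observation is that
\begin{equation*}
   \Var\{Q(\vec x)\} = 4\Var\{\vec a^\top \vec z\} + \Var\{\vec z^\top \mat B \vec z\} + 4\,\mathrm{Cov}(\vec a^\top \vec z,\ \vec z^\top \mat B \vec z).
\end{equation*}
The first term equals $4\vec a^\top \vec a = 4\vec\mu^\top \spd \mat\Sigma \spd \vec\mu$. The cross term vanishes because it is a third moment of a centered Gaussian and hence zero by symmetry (equivalently, by Isserlis' theorem the relevant pairings all involve $\Exp[z_i]=0$). The remaining task is to show $\Var\{\vec z^\top \mat B \vec z\} = 2\tr(\mat B^2)$, which I would do either by diagonalizing $\mat B = \mat Q \mat\Lambda \mat Q^\top$ and reducing to a sum $\sum_i \lambda_i \tilde z_i^2$ of independent chi-squared variables (each with variance $2\lambda_i^2$, summing to $2\sum_i \lambda_i^2 = 2\tr(\mat B^2)$), or directly via Isserlis' theorem applied to $\Exp[z_i z_j z_k z_\ell]$. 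Combining these pieces yields the stated variance formula.

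The main obstacle is genuinely minor: it is just the quadratic form variance identity $\Var\{\vec z^\top \mat B \vec z\} = 2\tr(\mat B^2)$, which requires either a spectral reduction or a careful application of Wick's formula; everything else is mechanical bookkeeping. Once \Cref{lemma:QuadraticFuncs} is in hand, \Cref{prop:ctrlObjMeanVar} follows by identifying $\vec x$ with $\dpar$ under the posterior $\postm$, taking $\vec\mu = \dparmap^\obs$, $\mat\Sigma = \postcov$, $\spd = \AA^*\M\AA$ (so that $Q(\dpar) = \|\AA\dpar\|_\M^2$), and then shifting the argument by the constant vector $\BB\ctrln + \vec q - \ubar$ to recover $\ctrlObjs$ in \cref{eq:ctrlObj}; the trace identities $\tr(\spd \postcov) = \tr(\AA\postcov\AA^*)$ and $\tr((\spd \postcov)^2) = \tr((\AA\postcov\AA^*)^2)$ follow again from cyclicity.
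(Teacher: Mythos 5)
Your proposal is correct and follows essentially the same route as the paper: both whiten via $\vec{x} = \vec{\mu} + \mat{\Sigma}^{1/2}\vec{z}$ and reduce to the spectrum of $\mat{\Sigma}^{1/2}\spd\mat{\Sigma}^{1/2}$, the only cosmetic difference being that the paper groups the terms into a generalized (noncentral) chi-squared sum $\sum_i \lambda_i (z_i + b_i)^2$ and cites known moment formulas, whereas you split $Q$ into constant, linear, and quadratic parts and compute the variance and vanishing cross-covariance directly. Both arguments are sound and yield the stated trace identities by cyclicity.
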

\begin{proof}
   Let $\vec{\hat{z}} \eqdef \mat{\Sigma}^{-1/2} (\vec{x}-\vec{\mu})$ be the standardized form of $\vec{x}$. One can express $\vec{x}$ in terms of $\vec{\hat{z}}$ as $\vec{x} = \mat{\Sigma}^{1/2}(\vec{\hat{z}} + \mat{\Sigma}^{-1/2}\vec{\mu})$.
Consider the spectral decomposition, $\mat{\Sigma}^{1/2}\spd\mat{\Sigma}^{1/2} = \mat{U}\mat{\Lambda}\mat{U}^\top$. Then, 
\begin{align*}
  Q(\mat{x}) = \vec{x}^\top \spd \vec{x}
  &= (\vec{\hat{z}} + \mat{\Sigma}^{-1/2}\vec{\mu})^\top \ {\Sigma}^{1/2}\spd\mat{\Sigma}^{1/2} \ (\vec{\hat{z}} + \mat{\Sigma}^{-1/2}\vec{\mu}) \\
  &= (\vec{\hat{z}} + \mat{\Sigma}^{-1/2}\vec{\mu})^\top \ \mat{U}\mat{\Lambda}\mat{U}^\top \ (\vec{\hat{z}} + \mat{\Sigma}^{-1/2}\vec{\mu}) \\
  &= (\mat{U}^\top\vec{\hat{z}} + \mat{U}^\top\mat{\Sigma}^{-1/2}\vec{\mu})^\top \mat{\Lambda}(\mat{U}^\top\vec{\hat{z}} + \mat{U}^\top\mat{\Sigma}^{-1/2}\vec{\mu}).
\end{align*}
For convenience, denote $\vec{z} \eqdef \mat{U}^\top\vec{\hat{z}}$, and $\vec{b} \eqdef \mat{U}^\top\mat{\Sigma}^{-1/2}\vec{\mu}$.
Then, 
\begin{align}
  Q(\vec{x}) = (\vec{z} + \vec{b})^\top \Lambda (\vec{z} + \vec{b}) = \sum_{i=1}^{n} \lambda_i(z_i + b_i)^2.
  \label{eq:chiSqForm}
\end{align}
Thus, the quadratic term follows a generalized chi-squared distribution, $$ Q(\vec{x}) \sim \tilde{\chi}^2\left( \{\lambda_i\}_1^n, \{d_i\}_1^n, \{b_i^2\}_1^n, 0, 0 \right)$$ where $d_i$ denotes the multiplicity of the $i$-th eigenvalue. Here, it is important to note that $z_i \sim \GM{0}{1}$ are independent standard normal random variables. This form can be used to obtain the mean and variance of $Q(\vec{x})$ as in \cref{eq:meanVarQuad}. These expressions are straightforward to derive (based on linearity) and we refer to \cite{Imhof61} and \cite{mathai1992quadratic} for an in-depth discussion of generalized chi-squared random variables:
\begin{align}
  \Exp\left\{ Q(\vec{x}) \right\} &= \sum_{i=1}^{n} \lambda_i (1+b_i)^2 = \tr(\spd\mat{\Sigma}) + \vec{\mu}^\top \spd\vec{\mu},\\
  \Var\left\{ Q(\vec{x}) \right\} &= 2\sum_{i=1}^{n} \lambda_i^2 (1+2b_i)^2 = 2\tr\left[ (\spd\mat{\Sigma})^2 \right] + 4\vec{\mu}^\top \spd\mat{\Sigma}\spd\vec{\mu}.
  \label{eq:meanVarQuad}
\end{align}
\end{proof}

We are now ready to state the proof of~\Cref{prop:ctrlObjMeanVar}: 
\begin{proof}
This result follows directly from the expectation and variance of the quadratic functional, \Cref{lemma:QuadraticFuncs}. For convenience, we express quantities with respect to the Euclidean inner product. 
By the scaling property of random normal vectors, we have $\tilde{\vec{e}} \eqdef \AA \dpar\ + \ \BB \ctrln +\vec{q} \ - \ \ubar \sim \GM{\AA\dparmap^\obs + \BB \ctrln + \vec{q} - \ubar}{\AA\postcov\AA^*\Mu^{-1}}$, which lies in $\R^n$. Note that the covariance of $\tilde{\vec{e}}$ is symmetric and positive-definite by construction.
By~\Cref{lemma:QuadraticFuncs} and the cyclic property of traces, we have 
\begin{align*}
   \Exp(\| \tilde{\vec{e}}\|^2_\Mu)
   &= \tr[\AA\postcov\AA^*] + \|\AA\dparmap^\obs + \BB \ctrln + \vec{q} - \ubar \|^2_\Mu,\\
   \Var(\| \tilde{\vec{e}}\|^2_\Mu) &= 2\tr[(\AA\postcov\AA^*)^2] + 4\|\AA\dparmap^\obs + \BB \ctrln + \vec{q} - \ubar \|^2_\mat{\Mu\AA\postcov\AA^*}
\end{align*}
Scaling the random variable $\| \tilde{\vec{e}}\|^2_\Mu$ by $\frac12$ gives the desired result. 
\end{proof}

\subsection{Hanson-Wright inequality and corollaries}

A useful tool for obtaining concentration bounds for sub-Gaussian random variables is the Hanson-Wright Inequality \cite{HansonWright1971,RudelsonVershyninHW2013}. We focus primarily on the application of this theorem to Gaussian random variables, which we state in \Cref{thm:HansonWright}.
\begin{theorem}[Hanson-Wright Inequality: Special Case]
   \label{thm:HansonWright}
   Let $\hat{\vec{z}} \sim \GM{\vec{0}}{\mat{I}_n}$ be a standard Gaussian random vector, and let $\spd$ be a symmetric matrix. Then, the quadratic random variable, $Q(\hat{\vec{z}}) = \ip{\hat{\vec{z}}}{\spd\hat{\vec{z}}}$ satisfies,
   \[\P(|Q(\hat{\vec{z}}) - \Exp\{Q(\hat{\vec{z}})\}| \geq t) \leq 2 \exp\left[ - \frac{1}{8} \min\left\{ \frac{t}{\|\spd\|_2}, \frac{t^2}{\|\spd\|_F^2} \right\}\right]\]
   for all $t \geq 0$.
\end{theorem}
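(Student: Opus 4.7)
The plan is to reduce the statement to a one-dimensional Chernoff bound on a weighted sum of shifted chi-squared random variables, and then balance the resulting Bernstein-type tail bound. First, I would symmetrize: since $\hat{\vec{z}}^\top \spd \hat{\vec{z}} = \hat{\vec{z}}^\top \spd_{\mathup{s}} \hat{\vec{z}}$ for $\spd_{\mathup{s}} \eqdef (\spd + \spd^\top)/2$, and since $\|\spd_{\mathup{s}}\|_2 \leq \|\spd\|_2$ and $\|\spd_{\mathup{s}}\|_F \leq \|\spd\|_F$ by the triangle inequality, it suffices to prove the claim for symmetric $\spd$. Next, diagonalize $\spd = \mat{U}\mat{\Lambda}\mat{U}^\top$ with eigenvalues $\lambda_1, \dots, \lambda_n$, and invoke orthogonal invariance of the standard Gaussian: $\mat{U}^\top \hat{\vec{z}} \sim \GM{\vec{0}}{\mat{I}_n}$. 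This reduces the quadratic form to
\[
Q(\hat{\vec z}) - \Exp\{Q(\hat{\vec z})\} = \sum_{i=1}^n \lambda_i(z_i^2 - 1),
\]
where the $z_i$ are i.i.d.\ standard normals, and importantly $\sum_i \lambda_i^2 = \|\spd\|_F^2$ and $\max_i |\lambda_i| = \|\spd\|_2$.

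The second step is a standard Chernoff argument on this sum. For any $|\eta| < 1/(2\|\spd\|_2)$, the moment generating function of a single shifted chi-squared variable is $\Exp[e^{\eta \lambda_i (z_i^2 - 1)}] = e^{-\eta \lambda_i}/\sqrt{1 - 2\eta \lambda_i}$. Using the elementary inequality $-x - \tfrac12\log(1-2x) \leq x^2/(1-2|x|)$ for $|x|<1/2$ with $x = \eta \lambda_i$, the log-MGF of the sum is bounded by $\eta^2 \|\spd\|_F^2 / (1 - 2|\eta| \|\spd\|_2)$. A Markov inequality then yields, for $t \geq 0$ and any admissible $\eta > 0$,
\[
\P\bigl(Q - \Exp\{Q\} \geq t\bigr) \leq \exp\!\left[-\eta t + \frac{\eta^2 \|\spd\|_F^2}{1 - 2\eta \|\spd\|_2}\right].
\]
Optimizing over $\eta$ in the two regimes $t \leq \|\spd\|_F^2/\|\spd\|_2$ (choose $\eta \propto t/\|\spd\|_F^2$, yielding the sub-Gaussian $t^2/\|\spd\|_F^2$ branch) and $t > \|\spd\|_F^2/\|\spd\|_2$ (choose $\eta \propto 1/\|\spd\|_2$, yielding the sub-exponential $t/\|\spd\|_2$ branch) produces the upper tail bound with an absolute constant $c > 0$. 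The lower tail follows symmetrically by replacing $\eta$ with $-\eta$ and using that the MGF bound is symmetric in $\eta$, and then summing the two tails supplies the factor of $2$ in front.

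The technical crux is the MGF estimate together with the correct two-regime optimization that produces the clean minimum of the two rates rather than an unwieldy mixed exponent; getting the constants and the range of admissible $\eta$ right in the log-MGF expansion is where most of the care is needed. Everything else (symmetrization, diagonalization, rotation invariance) is routine. As an alternative, one may simply cite the Hanson--Wright inequality from \cite{HansonWright1971,RudelsonVershyninHW2013}, which gives precisely this bound in the Gaussian subcase stated here, with the sub-Gaussian norm of a standard normal absorbed into the universal constant $c$.
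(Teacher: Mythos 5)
Your proposal is correct, but it takes a genuinely different route from the paper: the paper does not prove \Cref{thm:HansonWright} at all, treating it as a known result and deferring to \cite{RudelsonVershyninHW2013} for the proof and to \cite{Moshksar2024Absolute} for the admissible range of the constant $c$ (the accompanying remark then fixes $c = 1/8$). Your alternative closing suggestion --- just cite the literature --- is in fact exactly what the paper does. Your main argument, by contrast, is a self-contained derivation specialized to the Gaussian case: symmetrize $\spd$, use rotation invariance to reduce $Q(\hat{\vec{z}}) - \Exp\{Q(\hat{\vec{z}})\}$ to $\sum_i \lambda_i(z_i^2-1)$, bound the log-MGF by $\eta^2\|\spd\|_F^2/(1-2|\eta|\,\|\spd\|_2)$ via the elementary estimate on $-x-\tfrac12\log(1-2x)$, and run a two-regime Chernoff optimization. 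All of these steps check out (the Gaussian setting is precisely what lets you avoid the decoupling argument needed for the off-diagonal part in the general sub-Gaussian Hanson--Wright theorem). What your route buys is an explicit constant: taking $\eta = t/(2\|\spd\|_F^2 + 2t\|\spd\|_2)$ in the one-sided Chernoff bound gives an exponent of $-t^2/\bigl(4(\|\spd\|_F^2 + t\|\spd\|_2)\bigr) \leq -\tfrac18\min\{t^2/\|\spd\|_F^2,\; t/\|\spd\|_2\}$, i.e.\ exactly the $c=1/8$ that the paper adopts by appeal to \cite{Moshksar2024Absolute}. So your proof would let the paper make the concentration bounds in \Cref{prop:ctrlObjConcentration} and \Cref{lemma:quad_concentration} fully self-contained with the stated constant, at the cost of a page of routine but careful MGF bookkeeping; the paper's citation-based approach is shorter and covers the symmetrization and constant-tracking issues by reference.
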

\begin{proof}
From~\cite[Lemma 4]{CortinovisIndefinite2022}
\[ \begin{aligned}\P(|Q(\hat{\vec{z}}) - \Exp\{Q(\hat{\vec{z}})\}| \geq t) \leq &  2 \exp\left[ -  \frac{t^2 }{4t\|\spd\|_2 + 4\|\spd\|_F^2} \right] \\
\le & \> 2 \exp\left[ -  \frac{t^2 }{8\max\{t\|\spd\|_2 ,\|\spd\|_F^2\}} \right] . \end{aligned}\]
The proof is completed by a simple manipulation of the upper bound.
\end{proof}
\begin{remark}
Note that the constant $1/8$ in \cref{thm:HansonWright} can be replaced by any constant less than $0.14$, as shown in \cite[Lemma 1]{Moshksar2024Absolute} and \cite[Proposition 1]{MoshksarRefining2025}. The full proof of the Hanson-Wright inequality for sub-gaussian random variables can be referred to in \cite{RudelsonVershyninHW2013}. An alternative proof pertaining to the special case of Gaussian random vectors can also be found in the proof of \cite[Theorem B.8]{GiraudIntroduction2014}.
\end{remark}
\begin{theorem}[Hanson-Wright Inequality: General Covariance]
   \label{thm:HansonWrightSigma}
   Let $\vec{z} \sim \GM{\vec{0}}{\mat{\Sigma}}$ be a Gaussian random vector where $\mat{\Sigma} \in \R^{n\times n}$ is a symmetric positive-definite covariance operator. Consider a matrix $\spd \neq \mat{0}$ and the induced quadratic random variable, $Q(\vec{z}) = \ip{\vec{z}}{\spd\vec{z}}$. Then,
   \[\P(|Q(\vec{z}) - \Exp\{Q(\vec{z})\}| \geq t) \leq 2 \exp\left[ - \frac{1}{8} \min\left\{ \frac{t}{\|\mat{\Sigma}^{1 / 2}\spd\mat{\Sigma}^{1 / 2}\|_2}, \frac{t^2}{\|\mat{\Sigma}^{1 / 2}\spd\mat{\Sigma}^{1 / 2}\|_F^2} \right\}\right]\]
   for all $t \geq 0$. 
\end{theorem}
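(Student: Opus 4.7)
The plan is to reduce Theorem~\ref{thm:HansonWrightSigma} to the standard-Gaussian case stated in Theorem~\ref{thm:HansonWright} by a straightforward whitening transformation. Specifically, since $\mat{\Sigma}$ is SPD, it admits a symmetric positive-definite square root $\mat{\Sigma}^{1/2}$, and I would define the whitened vector $\hat{\vec{z}} \eqdef \mat{\Sigma}^{-1/2}\vec{z}$, which satisfies $\hat{\vec{z}} \sim \GM{\vec{0}}{\mat{I}_n}$ by standard properties of Gaussian vectors under linear transformations.

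Next, I would rewrite the quadratic form in terms of $\hat{\vec{z}}$:
\begin{equation*}
Q(\vec{z}) \;=\; \vec{z}^\top \spd \vec{z} \;=\; \hat{\vec{z}}^\top \bigl(\mat{\Sigma}^{1/2}\spd\mat{\Sigma}^{1/2}\bigr) \hat{\vec{z}} \;=\; \ip{\hat{\vec{z}}}{\tilde{\spd}\,\hat{\vec{z}}} \;=\vcentcolon\; \tilde{Q}(\hat{\vec{z}}),
\end{equation*}
where $\tilde{\spd} \eqdef \mat{\Sigma}^{1/2}\spd\mat{\Sigma}^{1/2}$. Because $\spd \neq \mat{0}$ and $\mat{\Sigma}$ is strictly positive-definite, $\tilde{\spd}$ is also nonzero, so Theorem~\ref{thm:HansonWright} applies to the quadratic form $\tilde{Q}(\hat{\vec{z}})$ with matrix $\tilde{\spd}$. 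This immediately yields
\begin{equation*}
\P\bigl(|\tilde{Q}(\hat{\vec{z}}) - \Exp\{\tilde{Q}(\hat{\vec{z}})\}| \geq t\bigr) \;\leq\; 2\exp\!\left[-\tfrac{1}{8}\min\!\left\{\tfrac{t}{\|\tilde{\spd}\|_2},\tfrac{t^2}{\|\tilde{\spd}\|_F^2}\right\}\right].
\end{equation*}

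Finally, since $Q(\vec{z}) = \tilde{Q}(\hat{\vec{z}})$ as random variables (they agree pointwise after the change of variables), their expectations coincide and the deviation events are identical, so substituting $\tilde{\spd} = \mat{\Sigma}^{1/2}\spd\mat{\Sigma}^{1/2}$ into the bound above produces exactly the conclusion of Theorem~\ref{thm:HansonWrightSigma}.

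I do not anticipate any real obstacle: the argument is essentially a one-line whitening reduction to Theorem~\ref{thm:HansonWright}. The only minor care needed is to note that $\spd$ is not required to be symmetric, but the quadratic form $\vec{z}^\top\spd\vec{z}$ depends only on the symmetric part $(\spd+\spd^\top)/2$, and the same applies to $\tilde{\spd}$; since Theorem~\ref{thm:HansonWright} is stated for general nonzero matrices with the operator and Frobenius norms of $\tilde{\spd}$ appearing in the bound, no additional symmetrization step is needed.
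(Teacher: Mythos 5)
Your proposal is correct and follows essentially the same route as the paper's own proof: whitening via $\hat{\vec{z}} = \mat{\Sigma}^{-1/2}\vec{z}$, rewriting $Q(\vec{z})$ as a quadratic form in $\hat{\vec{z}}$ with matrix $\mat{\Sigma}^{1/2}\spd\mat{\Sigma}^{1/2}$, and applying \Cref{thm:HansonWright}. Your additional remark about non-symmetric $\spd$ is a sensible clarification but not needed beyond what the paper already assumes.
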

\begin{proof}
   Let $\hat{\vec{z}} \eqdef \mat{\Sigma}^{-1/2}\vec{z}$ and $\hat{\spd} \eqdef \mat{\Sigma}^{1/2}\spd\mat{\Sigma}^{1/2}$. Then, $\ip{\vec{z}}{\spd\vec{z}} = \ip{\hat{\vec{z}}}{\hat{\spd}\hat{\vec{z}}}$, so ${Q}({\vec{z}}) = \hat{Q}(\hat{\vec{z}})$. Applying \Cref{thm:HansonWright} to the standardized random vector and the transformed matrix gives the desired result.
\end{proof}
\begin{remark}
   Note that in \Cref{thm:HansonWrightSigma}, we can reformulate the norms to obtain a weaker but more comprehensible bound. In particular, note that by symmetry and cyclical commutativity of the trace,
   \begin{align*}
      \|\mat{\Sigma}^{1 / 2}\spd\mat{\Sigma}^{1 / 2}\|_F^2 = \tr\left[  (\mat{\Sigma}^{1 / 2}\spd\mat{\Sigma}^{1 / 2})^2\right] 
      = \tr\left[\mat{\Sigma}^{1 / 2}\spd\mat{\Sigma} \spd\mat{\Sigma}^{1 / 2}\right] 
      = \tr\left[(\mat{\Sigma}\spd)^2\right].
   \end{align*}
   Since the two-norm is bounded above by the Frobenius norm, we have that 
   \[\|\mat{\Sigma}^{1 / 2}\spd\mat{\Sigma}^{1 / 2}\|_2 \leq \|\mat{\Sigma}^{1 / 2}\spd\mat{\Sigma}^{1 / 2}\|_F = \sqrt{ \tr\left[(\mat{\Sigma}\spd)^2\right] }.\]
   Inserting these bounds into the Hanson-Wright Inequality, we obtain the following alternative bound,
   \begin{equation}
      \P(|Q(\vec{z}) - \Exp\{Q(\vec{z})\}| \geq t) \leq 2 \exp\left[ - \frac{1}{8} \min\left\{ \frac{t}{\sqrt{\tr\left[(\mat{\Sigma}\spd)^2\right]}}, \frac{t^2}{\tr\left[(\mat{\Sigma}\spd)^2\right]} \right\}\right]
      \label{eq:easierHW}
   \end{equation}
   for all $t \geq 0$. 
\end{remark}
\begin{lemma}
\label[lemma]{lemma:quad_concentration}
Consider $Q(\vec{x})$ from \cref{eq:quadDef}. Then, for any $t > 0$,
\begin{equation}
   \P(|Q(\vec{x}) - \Exp[Q(\vec{x})]| \geq t) \leq 4\exp\left[ - \frac{1}{8}\min\left\{ \frac{t}{\sqrt{\tr\left[(\mat{\Sigma}\spd)^2\right]}}, \frac{t^2}{\tr\left[(\mat{\Sigma}\spd)^2\right]}, \frac{t^2}{(\spd\vec{\mu})^\top\mat{\Sigma}(\spd\vec{\mu})} \right\} \right].
   \label{eq:quad_concentration}
\end{equation}
\end{lemma}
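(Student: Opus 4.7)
The plan is to decompose $Q(\vec{x})$ into a centered quadratic form in a zero-mean Gaussian plus a linear Gaussian term, apply the Hanson-Wright inequality (Theorem~\ref{thm:HansonWrightSigma}) to the quadratic piece, apply a standard Gaussian tail bound to the linear piece, and combine via a union bound. Concretely, I would write $\vec{x} = \vec{\mu} + \vec{z}$ with $\vec{z} \sim \GM{\vec{0}}{\mat{\Sigma}}$, expand
\begin{equation*}
   Q(\vec{x}) \;=\; \vec{\mu}^\top \spd \vec{\mu} \;+\; 2(\spd\vec{\mu})^\top \vec{z} \;+\; \vec{z}^\top \spd \vec{z},
\end{equation*}
and subtract the mean computed in \Cref{lemma:QuadraticFuncs} so that
\begin{equation*}
   Q(\vec{x}) - \Exp[Q(\vec{x})] \;=\; \underbrace{\bigl(\vec{z}^\top \spd \vec{z} - \tr(\spd\mat{\Sigma})\bigr)}_{\displaystyle =:\, R(\vec{z})} \;+\; \underbrace{2(\spd\vec{\mu})^\top \vec{z}}_{\displaystyle =:\, L(\vec{z})}.
\end{equation*}

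The next step is the union bound: the event $\{|Q(\vec{x}) - \Exp Q(\vec{x})| \geq t\}$ is contained in $\{|R(\vec{z})| \geq t/2\} \cup \{|L(\vec{z})| \geq t/2\}$. For $R(\vec{z})$ I would apply Theorem~\ref{thm:HansonWrightSigma} in its relaxed form \cref{eq:easierHW}, which gives a tail bound in terms of $\tr[(\mat{\Sigma}\spd)^2]$ and produces the first two terms in the minimum. For $L(\vec{z})$ I would observe that it is a centered univariate Gaussian with variance $4(\spd\vec{\mu})^\top \mat{\Sigma}(\spd\vec{\mu})$, whose standard tail bound yields a term of the form $\exp\bigl(-t^2/[c\,(\spd\vec{\mu})^\top\mat{\Sigma}(\spd\vec{\mu})]\bigr)$, producing the third term in the minimum.

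Adding the two resulting exponential bounds and upper-bounding the sum by twice the larger summand gives a factor of $4$ in front and a minimum of the three exponents in the argument, which is exactly the structure of \cref{eq:quad_concentration}. The main obstacle is cosmetic rather than conceptual: the $t/2$ in each branch of the union bound introduces extra factors of $2$ and $4$, so matching the clean constant $1/8$ stated in the lemma requires either absorbing these factors into the (admittedly loose) Hanson-Wright constant noted in the remark following Theorem~\ref{thm:HansonWright}, or stating the bound with a slightly smaller universal constant and rounding up to $1/8$. No step involves anything beyond linearity, the Gaussian decomposition, and the two tail inequalities already invoked in the paper.
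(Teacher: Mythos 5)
Your proposal is correct and follows essentially the same route as the paper's own proof: center the Gaussian, expand the quadratic form into a centered quadratic piece plus a linear piece, bound the former with the relaxed Hanson--Wright inequality \cref{eq:easierHW} and the latter with a Gaussian tail bound, and combine via a union bound and the maximum of the two exponentials. If anything you are more careful than the paper on the one delicate step: you correctly split the deviation as $\{|R|\geq t/2\}\cup\{|L|\geq t/2\}$ and acknowledge that the resulting factors must be absorbed into the loose Hanson--Wright constant, whereas the paper's displayed union bound uses $t$ in both branches, which is not a valid containment as written and silently relies on the same constant slack you identify.
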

\begin{proof}
   Let $\vec{z} \eqdef \vec{x} - \vec{\mu}$ be the centered random variable in $\GM{\vec{0}}{\mat{\Sigma}}$. Then,
   \begin{align*}
     Q(\vec{x}) = \ip{\vec{x}}{\spd\vec{x}} &= \ip{\vec{z} + \vec{\mu}}{\spd(\vec{z} + \vec{\mu})} \\
     &= Q(\vec{z}) + 2\ip{\vec{z}}{\spd\vec{\mu}} + \ip{\vec{\mu}}{\spd\vec{\mu}}.
   \end{align*}
   By \cref{eq:easierHW}, note that for any $t \geq 0$,
   \[\P(|Q(\vec{z}) - \tr(\mat{A\Sigma})| \geq t) \leq 2 \exp\left[ - \frac{1}{8} \min\left\{ \frac{t}{\sqrt{\tr\left[(\mat{\Sigma}\spd)^2\right]}}, \frac{t^2}{\tr\left[(\mat{\Sigma}\spd)^2\right]} \right\}\right].\]
   Moreover, note that $\ip{\vec{z}}{\spd\vec{\mu}}$ is a Gaussian random variable with mean zero and variance $(\spd\vec{\mu})^\top\mat{\Sigma}(\spd\vec{\mu})$. Therefore, 
   by a Chernoff bound \cite{Chernoff1952Measure}, we have that
   \[\P(|\ip{\vec{z}}{\spd\vec{\mu}}| \geq t) \leq 2\exp\left[-\frac{t^2}{2(\spd\vec{\mu})^\top\mat{\Sigma}(\spd\vec{\mu})} \right].\]
   Using the triangle inequality and a union bound, we can combine the tail bounds to obtain that
   \begin{align*}
     \P(|Q(\vec{x}) - \Exp[Q(\vec{x})]| \geq t) &= \P(|Q(\vec{x}) - \tr(\spd) - \vec{\mu}^\top\spd\vec{\mu}| \geq t) \\[0.5em]
     &= \P(|Q(\vec{z}) + 2\ip{\vec{z}}{\spd\vec{\mu}} - \tr(A)| \geq t) \\[0.5em]
     &\leq \P(|Q(\vec{z}) - \tr(\spd)| \geq t) + \P(|2\ip{\vec{z}}{\spd\vec{\mu}}| \geq t)\\[0.5em]
     &\leq 2 \exp\left[ - \frac{1}{8} \min\left\{ \frac{t}{\sqrt{\tr\left[(\mat{\Sigma}\spd)^2\right]}}, \frac{t^2}{\tr\left[(\mat{\Sigma}\spd)^2\right]} \right\}\right] \\
     &\qquad+ 2\exp\left[-\frac{t^2}{8(\spd\vec{\mu})^\top\mat{\Sigma}(\spd\vec{\mu})} \right].
   \end{align*}
   We can combine the two exponential summands by their maximum, which provides the desired result.
\end{proof}

\subsubsection*{Proof of \Cref{prop:ctrlObjConcentration}}
\label{sec:proof_ctrlObjConcentration}
This result follows from the concentration bound for quadratic forms, \Cref{lemma:quad_concentration}, which builds on the Hanson-Wright Inequality. As in the proof of \Cref{prop:ctrlObjMeanVar}, we consider $\tilde{\vec{e}} \eqdef \AA \dpar\ + \ \BB \ctrln +\vec{q} \ - \ \ubar \sim \GM{\AA\dparmap + \BB \ctrln + \vec{q} - \ubar}{\AA\postcov\AA^*\Mu^{-1}}$ 
so that $\ctrlObjs = \|\tilde{\vec{e}}\|_\Mu^2$. 
Since $\AA\postcov\AA^*\Mu^{-1}$ is symmetric positive-definite, \Cref{lemma:quad_concentration} yields that for any $t \geq 0$, we have
\begin{align*}
   \P\left( \left| \|\tilde{\vec{e}}\|_\Mu^2 -  \Exp(\|\tilde{\vec{e}}\|_\Mu^2)\right| \geq \tau\right) \leq 4\exp\left[ - \frac{1}{8}\min\left\{ \frac{\tau}{\sqrt{\tr\left[(\AA\postcov\AA^*)^2\right]}}, \frac{\tau^2}{\tr\left[(\AA\postcov\AA^*)^2\right]}, \frac{\tau^2}{C^2} \right\} \right],
\end{align*}
where $C$ is as defined in the statement.
Recall that for a symmetric positive semi-definite matrix $\mat{Z}$, 
\(\tr[\mat{Z}^2] \leq \tr[\mat{Z}]^2\).
Applying this result to $\mat{Z} \eqdef \Mu^{1 / 2}\AA\postcov\AA^*\Mu^{-1 / 2}$, we have
\begin{equation}
   \tr[(\Mu^{1 / 2}\AA\postcov\AA^*\Mu^{-1 / 2})^2] \leq [\tr(\Mu^{1 / 2}\AA\postcov\AA^*\Mu^{-1 / 2})]^2.
   \label{eq:trSqPropMass}
\end{equation}
Notice that the trace is invariant under similarity transformation, i.e.,
\[\tr(\Mu^{1 / 2}\AA\postcov\AA^*\Mu^{-1 / 2}) = \tr(\AA\postcov\AA^*)\]
and 
\[\tr[(\Mu^{1 / 2}\AA\postcov\AA^*\Mu^{-1 / 2})^2] = \tr[\Mu^{1 / 2}(\AA\postcov\AA^*)^2\Mu^{-1 / 2}] = \tr[(\AA\postcov\AA^*)^2].\]
Thus, \cref{eq:trSqPropMass} simplifies to \(\tr[(\AA\postcov\AA^*)^2] \leq [\tr(\AA\postcov\AA^*)]^2\). Substituting this inequality into the above probability bound achieves the desired result.\hfill\proofbox

\subsubsection*{Proof of \Cref{cor:controlObjInterval}}
\label{sec:proof_ctrlObjConcentrationCorollary}
   The proof follows from a simple manipulation of the concentration bound \cref{eq:ctrlObjConcentration}. 
   Namely, we want to obtain $\tau$ such that the upper bound for the deviation probability in \cref{eq:ctrlObjConcentration} is less than $\delta$, i.e., such that
   \[4\exp\left[ - \frac{1}{8}\min\left\{ \frac{\tau}{\Psi^{cA}}, \frac{\tau^2}{(\Psi^{cA})^2}, \frac{\tau^2}{C^2} \right\} \right] \leq \delta.\]
   Rearranging and solving for $\tau$ yields the upper bound in \cref{eq:ctrlObjConcentrationInterval}. \hfill\proofbox

\end{document}